\numberwithin{equation}{section}
\newtheorem{theorem}{Theorem}[section]
\newtheorem{lemma}[theorem]{Lemma}
\newtheorem{question}[theorem]{Question}
\newtheorem{problem}[theorem]{Problem}
\newtheorem{corollary}[theorem]{Corollary}
\newtheorem*{Bruckner}{Theorem \ref{t:npns}}
\newtheorem*{isoshy}{Theorem \ref{t:isoshy}}
\newtheorem*{perfect}{Theorem \ref{t:perfect}}
\newtheorem*{cperfect}{Corollary \ref{c:perfect}}
\newtheorem*{infderiv}{Theorem \ref{t:infderiv}}
\theoremstyle{definition}
\newtheorem{remark}[theorem]{Remark}
\newtheorem{definition}[theorem]{Definition}
\DeclareMathOperator{\dist}{dist}
\DeclareMathOperator{\inter}{int}
\DeclareMathOperator{\cl}{cl}
\DeclareMathOperator{\diam}{diam}
\DeclareMathOperator{\supp}{supp}
\DeclareMathOperator{\Prob}{Pr}
\DeclareMathOperator{\Comp}{Comp}
\begin{document}

\title{Bruckner--Garg-type results with respect to Haar null sets in $C[0,1]$}
\author{Rich\'ard Balka}
\address{Department of Mathematics, University of Washington, Box 354350, Seattle, WA 98195-4350, USA
and Alfr\'ed R\'enyi Institute of Mathematics, Hungarian Academy of Sciences, PO Box 127, 1364 Budapest, Hungary}
\email{balka@math.washington.edu}

\thanks{The first author was supported by the
Hungarian Scientific Research Fund grants no.~72655 and 104178. The third author was supported by
the Hungarian Scientific Research Fund grants no.~72655, 83726 and 104178.}

\author{Udayan B. Darji}

\address{Department of Mathematics, University of Louisville, Louisville, KY 40292,
USA}

\email{ubdarj01@louisville.edu}

\author{M\'arton Elekes}

\address{Alfr\'ed R\'enyi Institute of Mathematics, Hungarian Academy of Sciences,
PO Box 127, 1364 Budapest, Hungary and E\"otv\"os Lor\'and
University, Institute of Mathematics, P\'azm\'any P\'eter s. 1/c,
1117 Budapest, Hungary}

\email{elekes.marton@renyi.mta.hu}

\subjclass[2010]{Primary: 28C10, 43A05, 46E15; Secondary: 26A24, 54E52, 60J65.}

\keywords{Haar ambivalent, Haar null, shy, prevalent, level sets, continuous functions, Cantor set, continuous maps, Brownian motion, fibers,
Baire category, typical, generic.}

\thispagestyle{empty}

\begin{abstract}
A set $\mathcal{A}\subset C[0,1]$ is \emph{shy} or \emph{Haar null } (in the sense of Christensen)
if there exists a Borel set $\mathcal{B}\subset C[0,1]$ and a Borel probability measure $\mu$ on $C[0,1]$ such that $\mathcal{A}\subset \mathcal{B}$ and
$\mu\left(\mathcal{B}+f\right) = 0$ for all $f \in C[0,1]$. The complement of a shy set is called a \emph{prevalent} set. We say that a set is \emph{Haar ambivalent} if it is neither shy nor prevalent.

The main goal of the paper is to answer the following question: What can we say about the topological properties of the level sets of the prevalent/non-shy many $f\in C[0,1]$?

The classical Bruckner--Garg Theorem characterizes the level sets of the generic (in the sense of Baire category)
$f\in C[0,1]$ from the topological point of view. We prove that the functions $f\in C[0,1]$ for which the same characterization holds form a Haar ambivalent set.

In an earlier paper we proved that the functions
$f\in C[0,1]$ for which positively many level sets with respect to the Lebesgue measure $\lambda$
are singletons form a non-shy set in $C[0,1]$. The above result yields that this set is actually Haar ambivalent.
Now we prove that the functions $f\in C[0,1]$ for which positively many level sets
with respect to the occupation measure $\lambda\circ f^{-1}$ are not perfect
form a Haar ambivalent set in $C[0,1]$.

We show that for the prevalent $f\in C[0,1]$ for the generic
$y\in f([0,1])$ the level set $f^{-1}(y)$ is perfect.

Finally, we answer a question of Darji and White by showing that the set of functions $f \in C[0,1]$ for which there exists a perfect
set $P_f\subset [0,1]$ such that $f'(x) = \infty$ for all $x \in P_f$ is Haar ambivalent.
\end{abstract}

\maketitle

\section{Introduction}

Let $G$ be a \emph{Polish group}, i.e. a separable topological group which is endowed with a compatible complete metric.
If $G$ is locally compact then there exists a \emph{Haar measure} on $G$, that is,
a left translation invariant regular Borel measure which is finite on compact sets and positive on non-empty open sets.
The concept of Haar measure does not extend to groups that are not locally compact, but the notion of Haar measure zero does. The following definition is due to Christensen \cite{C}
and was rediscovered by Hunt, Sauer and York \cite{HSY}.

\begin{definition} \label{d:shy} For an abelian Polish group $G$ a set $A\subset G$ is \emph{shy} or \emph{Haar null}
if there exists a Borel set $B\subset G$ and a Borel probability measure $\mu$ on $G$ such that $A\subset B$ and $\mu\left(B+x\right) = 0$ for all $x\in G$.
The complement of a shy set is called a \emph{prevalent} set. We say that a set is \emph {Haar ambivalent} if it is neither shy nor prevalent.
\end{definition}

Christensen proved in \cite{C} that shy sets form a $\sigma$-ideal and in locally compact abelian Polish groups Haar measure zero sets and shy sets coincide.

\bigskip

Denote by $C[0,1]$  the Banach space of continuous functions $f\colon [0,1]\to \mathbb{R}$ endowed with the supremum metric.
By \emph{Cantor set} we mean a set homeomorphic to the classical `middle-third' Cantor set and \emph{generic} is understood in the sense of Baire category.
Let us recall the well-known Bruckner--Garg Theorem, see \cite{BG}.

\begin{theorem}[Bruckner--Garg] \label{t:BG} The generic $f\in C[0,1]$ has the property that there is a countable
dense set $D_f\subset (\min f,\max f)$ such that
\begin{enumerate}[(1)]
\item $f^{-1}(y)$ is a singleton if $y\in  \{\min f,\max f\}$,

\item $f^{-1}(y)$ is a Cantor set if $y\in (\min f,\max f) \setminus D_f$,

\item $f^{-1}(y)$ is the union of a Cantor set and an isolated point if $y\in D_f$.
\end{enumerate}
\end{theorem}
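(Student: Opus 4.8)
The plan is to show that the set $\mathcal{R}\subseteq C[0,1]$ of functions satisfying the conclusion is comeager, by writing it as a countable intersection of residual sets. The generic properties I would use are: (a) $f$ is nowhere locally constant (no level set contains a subinterval of $[0,1]$); (b) $f$ attains $\min f$ and $\max f$ each at a single point; and (c) $f$ has no \emph{crossing point}, i.e.\ no $x\in(0,1)$ admitting a $\delta>0$ with $f>f(x)$ on $(x-\delta,x)$ and $f<f(x)$ on $(x,x+\delta)$ (or with the two inequalities reversed), while at the same time all strict local extrema of $f$ --- one-sided versions permitted at the endpoints $0$ and $1$ --- occur at pairwise distinct values, and every open subinterval of $(\min f,\max f)$ contains the value of some strict local extremum of $f$. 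Each of (a)--(c) is a countable intersection of sets that I would prove residual by the standard device: express the complementary ``bad'' family as a countable union of sets indexed by finitely many rationals, pass to the closure of each (which is contained in an explicitly described closed set), and show it nowhere dense via a small perturbation --- a tall thin spike to create or relocate a strict extremum, and a fine sawtooth added on a rational subinterval to destroy intervals of constancy and to defeat the crossing-type configurations. For the uniqueness in (b) one covers the bad set by the closed nowhere dense sets $\{f:\max_{[0,p]}f=\max_{[q,1]}f\ge\max_{[p,q]}f\}$, $p<q$ rational, and symmetrically for the minimum.

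Granting (a)--(c), the statement follows quickly. Clause (1) is exactly (b). Fix $y\in(\min f,\max f)$; by (a) the closed set $f^{-1}(y)$ contains no interval and so is totally disconnected. If $x$ is an isolated point of $f^{-1}(y)$, then $f-y$ is nonzero on a punctured neighbourhood of $x$, hence of constant sign on each component of it; since $f$ has no crossing point, for interior $x$ these two signs agree, so in every case $x$ is a (possibly one-sided) strict local extremum of $f$ with value $y$, and by the distinct-values clause there is at most one such $x$. Moreover $f^{-1}(y)$ cannot be the single point $\{x\}$: if $x\in(0,1)$ then, since $\min f<y<\max f$, the two sides of $x$ carry values of $f$ on opposite sides of $y$, so $x$ is a crossing point; and if $x\in\{0,1\}$ then $f-y$ has constant sign on $(0,1)$, forcing $y\in\{\min f,\max f\}$. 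Hence $f^{-1}(y)$ is either a nonempty perfect compact totally disconnected set --- a Cantor set by the classical topological characterization --- which gives (2) off an exceptional set $D_f$, or it is $C\cup\{x\}$ with $C=f^{-1}(y)\setminus\{x\}$ closed, nonempty, of empty interior and with no isolated point (an isolated point of $C$ would be isolated in $f^{-1}(y)$), hence a Cantor set, which gives (3). Finally $D_f$ is the set of $y\in(\min f,\max f)$ whose level set has an isolated point; sending each such $y$ to that point is injective (level sets are disjoint) and lands in the countable set of strict local extrema, so $D_f$ is countable, and it is dense in $(\min f,\max f)$ because a strict local extremum is isolated in its own level set and such values are dense by hypothesis.

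The real obstacle is (c), and inside it the non-existence of crossing points. The reduction sends this to showing that, for rationals $p<q$, the closed set $\{f:\exists\,x\in[p,q]\text{ with }\min_{[p,x]}f=f(x)=\max_{[x,q]}f\}$ (together with its ``vice versa'' companion) has empty interior. A fine-sawtooth perturbation of $f$ on $[p,q]$ does this --- for a sufficiently fine sawtooth no $x$ can simultaneously realize the minimum to its left and the maximum to its right --- but the verification has to handle separately the $x$ lying near $p$, near $q$, or near a tooth of the sawtooth, and the whole family of crossing-type sets must be organized into a genuine countable union of nowhere dense sets. Everything else --- nowhere local constancy, uniqueness of the global extrema, density of strict-extremum values, and the deduction of the statement from (a)--(c) via the topological characterization of the Cantor set --- is routine Baire-category bookkeeping and should occupy far less of the argument than (c).
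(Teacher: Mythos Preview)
The paper does not prove this theorem; it is quoted in the introduction as the classical Bruckner--Garg Theorem with a reference to \cite{BG}, and the only related statement the paper records is Lemma~\ref{l:BGp} (again cited from \cite{BG}): if $f$ is not monotone at any point and is one-to-one on its local extremum points, then $f$ has the Bruckner--Garg property. So there is no in-paper proof to compare against.

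That said, your sketch is a sound outline of a direct proof, and it is closely related to (but organized differently from) the classical route via Lemma~\ref{l:BGp}. Your conditions (a) and ``no crossing point'' in (c) are both consequences of ``$f$ is not monotone at any point'': if $x$ is a crossing point as you define it, then $\frac{f(z)-f(x)}{z-x}$ has constant sign on a punctured neighbourhood of $x$, so $f$ is monotone at $x$. Thus the single hypothesis ``nowhere monotone'' already delivers (a) and the crossing-point part of (c), and the hard residuality verification you flag (the sawtooth argument for the crossing-type sets) is exactly the content of the classical fact that nowhere-monotone functions are residual in $C[0,1]$. Your ``distinct values at strict local extrema'' is the Baire-category analogue of the paper's Lemma~\ref{l:notex}. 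The one extra ingredient you add is the density of strict-local-extremum values in $(\min f,\max f)$; this is not listed in Lemma~\ref{l:BGp}, but it follows once $f$ is nowhere monotone and one-to-one on its local extrema (every nondegenerate subinterval of $[0,1]$ then contains strict local extrema with distinct values, and an intermediate-value argument places some such value in any prescribed open subinterval of $(\min f,\max f)$), so in the classical decomposition it need not be assumed separately.

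Your deduction of (1)--(3) from (a)--(c) is correct; the only point worth tightening is the sentence ``an isolated point of $C$ would be isolated in $f^{-1}(y)$'': strictly, you should shrink the isolating neighbourhood to avoid the single already-removed point $x$ before invoking the distinct-values clause. Otherwise the argument is fine.
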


The above theorem completely describes the level set structure of the generic $f\in C[0,1]$ from the topological point of view.
If we now replace Baire category with the measure theoretic
notion of prevalence, we arrive at the the main question of the paper:

\begin{question} What can we say about the topological properties of the level sets of the prevalent/non-shy many $f\in C[0,1]$?
\end{question}

We say that $f\in C[0,1]$ has the \emph{Bruckner--Garg property} if its level sets are as described in Theorem \ref{t:BG}. The following
theorem shows that our situation is more complicated than the Baire category case.

\begin{Bruckner} The set
$$\{f\in C[0,1]: f \textrm{ has the Bruckner--Garg property}\}$$
is Haar ambivalent in $C[0,1]$.
\end{Bruckner}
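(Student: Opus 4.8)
The plan is to establish separately that $\mathcal{BG}:=\{f\in C[0,1]:f\text{ has the Bruckner-Garg property}\}$ is not prevalent and is not shy. For the first assertion, note that if $f$ has the Bruckner-Garg property then, by clauses (1)--(3), $f^{-1}(y)$ is a singleton only for $y\in\{\min f,\max f\}$ (for every other $y\in(\min f,\max f)$ the fibre contains a Cantor set, and $f$ is non-constant, since a constant function already violates (1)). Thus $f$ has exactly two singleton fibres, so both $\{x\in[0,1]:f^{-1}(f(x))=\{x\}\}$ and its image are finite, and in particular $f$ does not lie in the set $\mathcal{S}$ of functions with positively many singleton fibres in the sense recalled in the abstract; that is, $\mathcal{S}\cap\mathcal{BG}=\emptyset$. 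Since $\mathcal{S}$ is non-shy by the authors' earlier result, $C[0,1]\setminus\mathcal{BG}\supseteq\mathcal{S}$ is non-shy, i.e.\ $\mathcal{BG}$ is not prevalent. (This uses nothing about the other half, so there is no circularity; conversely, the non-shyness of $\mathcal{BG}$ proved below upgrades $\mathcal{S}$ to Haar ambivalent, as announced in the abstract.)

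For non-shyness I would deduce everything from the following ``catching'' statement, which carries all the weight: for every uniformly bounded equicontinuous $K\subseteq C[0,1]$ there is a Borel probability measure $\rho_K$ on $C[0,1]$ with $\rho_K(\{h:g+h\in\mathcal{BG}\})=\rho_K(\mathcal{BG}-g)=1$ for every $g\in K$. Granting this, let $\nu$ be an arbitrary Borel probability measure on $C[0,1]$; by inner regularity choose a compact $K$ with $\nu(K)>0$, which is uniformly bounded and equicontinuous by Arzel\`a--Ascoli, and put $\rho=\rho_K$. Since $\mathcal{BG}$ is universally measurable (a routine descriptive-set-theoretic check), Fubini's theorem applies to the indicator of $\{(g,h):g+h\in\mathcal{BG}\}$ and gives
\[
\int_{C[0,1]}\nu(\mathcal{BG}-h)\,d\rho(h)=\int_{C[0,1]}\int_{C[0,1]}\mathbf{1}_{\mathcal{BG}}(g+h)\,d\rho(h)\,d\nu(g)\ \ge\ \int_{K}\rho(\mathcal{BG}-g)\,d\nu(g)=\nu(K)>0 .
\]
Hence some $h_0$ satisfies $\nu(\mathcal{BG}-h_0)=\nu(\mathcal{BG}+(-h_0))>0$, so $\nu$ is not transverse to $\mathcal{BG}$; as $\nu$ was arbitrary, $\mathcal{BG}$ is not Haar null.

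The real content is the construction of $\rho_K$, which is where I expect the main obstacle to lie. Fix a modulus of continuity $\omega$ for $K$ and pick a modulus $\omega^{\ast}$ with $\omega^{\ast}(\delta)\to0$ but $\omega^{\ast}(\delta)/\omega(\delta)\to\infty$ as $\delta\to0$. Let $\rho_K$ be the law of a random continuous function $h=\sum_n a_n\psi_n$, where the $\psi_n$ run through the dyadic tent (Faber--Schauder) functions and the $a_n$ are independent with absolutely continuous distributions, scaled so that almost surely $h$ is nowhere monotone and oscillates on every scale $\delta$ by an amount comparable to $\omega^{\ast}(\delta)$ --- so that, crucially, the oscillation of $h$ dominates that of every $g\in K$ on every scale. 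Fixing $g\in K$ and writing $F=g+h$, one must show $F\in\mathcal{BG}$ almost surely, which I would split into: (i) $F$ is a.s.\ nowhere monotone, hence every fibre is nowhere dense and every $y\in(\min F,\max F)$ has an infinite fibre; (ii) $F$ a.s.\ attains its maximum and minimum at unique points; (iii) the strict local extrema of $F$ form a countable set whose values are a.s.\ pairwise distinct and dense in $(\min F,\max F)$ --- here the absolute continuity of the $a_n$ is exactly what rules out the ``two local maxima at the same height'' obstruction that a deterministic perturbation could not avoid; (iv) for $y$ not a local-extremum value, $F^{-1}(y)$ has no isolated point and is therefore a Cantor set; (v) for $y$ a local-extremum value, $F^{-1}(y)$ is the union of a Cantor set and the single extremum point. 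Steps (iii)--(v), and in particular the transversality/oscillation estimates in (iv)--(v) that force every fibre (outside a countable exceptional set of values) to be \emph{exactly} perfect, are the hard part; they require simultaneously exploiting the scale-by-scale domination of $g$ by $h$ and the genericity of the random coefficients (the special case $g\equiv 0$ already amounts to the fact that a suitably wild random function, e.g.\ Brownian motion, has the Bruckner-Garg property almost surely, and the perturbed case is its robust extension). The remaining ingredients --- the Fubini reduction, the Arzel\`a--Ascoli step, and the universal measurability of $\mathcal{BG}$ --- are routine, and combining the two halves shows that $\mathcal{BG}$ is Haar ambivalent.
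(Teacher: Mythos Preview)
Your argument that $\mathcal{BG}$ is not prevalent is exactly the paper's: disjointness from the non-shy set $\mathcal{S}$ of functions with Lebesgue-positively many singleton fibres.

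For the non-shy half, however, you are working much harder than necessary, and the part you yourself flag as ``the hard part'' is left undone. The paper bypasses your steps (i)--(v) entirely by invoking a classical characterization due to Bruckner and Garg (Lemma~\ref{l:BGp}): if $f\in C[0,1]$ is not monotone at any point and is one-to-one on its set of local extremum points, then $f$ already has the Bruckner--Garg property. With this in hand, non-shyness reduces to two short facts: (a) the set of nowhere-monotone functions is non-shy --- this is a known result of Darji--White, and is precisely where a compact-catching argument of the kind you propose has \emph{already} been carried out, for a far simpler target property; and (b) the set of functions injective on their local extremum points is \emph{prevalent}, which the paper proves with a one-dimensional probe (for each pair of disjoint rational intervals $I,J$, the line $\{g_u:u\in[0,1]\}$ meets the bad set in at most four points). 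Since a non-shy set intersected with a prevalent set is non-shy, $\mathcal{BG}$ is non-shy.

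Your route, by contrast, tries to build a single random perturbation that forces the full level-set structure directly. Steps (iv)--(v) would in effect require re-proving the characterization lemma inside your probabilistic model, and step (iii) --- almost-sure distinctness of local-extremum values of $g+h$ for \emph{every} $g$ in the compact $K$ --- is genuinely delicate: the local extrema of $g+h$ are not located at fixed dyadic nodes, so ``absolute continuity of the $a_n$'' does not settle it without a real argument. Your Fubini/compact-catching framework is sound, but the missing idea is the factorization via Lemma~\ref{l:BGp}, which turns the hard construction into one citation plus one easy prevalence lemma.
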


We use $\exists^{\mu}$ to denote positively many
with respect to the measure $\mu$. The following theorem is \cite[Corollary~6.2]{BDE}.

\begin{theorem}[Balka--Darji--Elekes] \label{t:nonshy}
The set
$$\{f\in C[0,1]:  \exists^{\lambda} y\in \mathbb{R} \textrm{ such that } f^{-1}(y) \textrm{ is a singleton}\}$$
is Haar ambivalent in $C[0,1]$.
\end{theorem}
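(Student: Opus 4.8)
The plan is to prove that the set in question, call it $\mathcal A$, is not Haar null by a direct translation argument. Suppose toward a contradiction that $\mathcal A$ is Haar null, witnessed by a Borel set $\mathcal B\supseteq\mathcal A$ and a Borel probability measure $\mu$ with $\mu(\mathcal B+f)=0$ for all $f\in C[0,1]$. Since $\mu$ is a Borel measure on a Polish space it is tight, so I may fix a compact $L\subseteq C[0,1]$ with $\mu(L)>0$. By the Arzel\`a--Ascoli theorem $L$ is equicontinuous; let $\omega$ be an increasing (concave) modulus of continuity with $|g(x)-g(y)|\le\omega(|x-y|)$ for all $g\in L$, $x,y\in[0,1]$, and note $\omega(1)<\infty$ since $L$ is bounded. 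It then suffices to produce a \emph{single} $h\in C[0,1]$ with $g+h\in\mathcal A$ for every $g\in C[0,1]$ satisfying $|g(x)-g(y)|\le\omega(|x-y|)$: for then $L\subseteq\mathcal A-h\subseteq\mathcal B-h$, whence $\mu(\mathcal B+(-h))\ge\mu(L)>0$, contradicting the choice of $\mu$. Note that we never need $\mathcal A$ itself to be measurable, only that it lies inside its Borel hull $\mathcal B$.

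The function $h$ will be a ``generalized Cantor--Lebesgue function'' of large total height, tailored to $\omega$. Fix a large constant $c$ (how large is decided at the end). Choose a Cantor set $K=\bigcap_n K_n\subseteq[0,1]$ with $0,1\in K$, built by repeatedly deleting a middle subinterval so that the $2^k$ intervals of $K_k$ all have one common length $\ell_k$, where $\ell_k\downarrow 0$ so fast that $\omega(\ell_k)<4^{-k}$ for all $k$ (possible since $\omega(0+)=0$) and $\ell_{k+1}\le\ell_k/2$. Let $E\subseteq[0,H]$, with $H$ slightly larger than $c$, be a Cantor set, $0,H\in E$, whose $2^k$ level-$k$ intervals all have one common length $\delta_k$ chosen with $2^k\delta_k\to c$ (so $\lambda(E)=c$) and with each level-$k$ middle gap of $E$ of width $\eta_k:=\delta_k-2\delta_{k+1}>\omega(\ell_k)$; this is arranged by $\delta_k=c2^{-k}(1+\varepsilon_k)$ for a suitable decreasing null sequence $\varepsilon_k$, using $\sum_k 2^k\omega(\ell_k)<\sum_k 2^{-k}<\infty$. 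Finally let $h\colon[0,1]\to[0,H]$ be the increasing continuous surjection carrying each level-$k$ interval of $K$ onto the corresponding level-$k$ interval of $E$ order-preservingly and each deleted gap of $K$ affinely onto the corresponding deleted gap of $E$.

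Now fix $g$ with modulus $\le\omega$, set $F=g+h$, and let $(a_i,b_i)$ enumerate the deleted gaps of $K$. Three facts are needed. (a) $F|_K$ is strictly increasing: if $x<x'$ in $K$ first separate at level $k+1$, then $|x-x'|\le\ell_k$ while $h(x')-h(x)\ge\eta_k>\omega(\ell_k)\ge|g(x)-g(x')|$, so $F(x')>F(x)$. (b) $F(K)$ is large: since $F|_K$ is strictly increasing, $\lambda(F(K))=(F(1)-F(0))-\sum_i(F(b_i)-F(a_i))$, and $F(1)-F(0)\ge H-\omega(1)$, while $F(b_i)-F(a_i)\le\eta_{k(i)}+\omega(\ell_{k(i)})$ gives $\sum_i(F(b_i)-F(a_i))\le\sum_k 2^k(\eta_k+\omega(\ell_k))=O(1)$, so $\lambda(F(K))\ge c-O(1+\omega(1))$. (c) Few values of $F(K)$ are taken off $K$: if $x'$ lies in a deleted gap $(a_i,b_i)$ of level $k+1$, then $h(x')\in[h(a_i),h(b_i)]$ and $|g(x')-g(a_i)|\le\omega(\ell_k)$, so $F(x')$ lies within $O(\omega(\ell_k))$ of $[F(a_i),F(b_i)]$; since $F(K)$ misses the open interval $(F(a_i),F(b_i))$, the set of values taken both on $K$ and on some gap has measure $\le\sum_k 2^k\,O(\omega(\ell_k))=O(1)$. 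Combining (a)--(c): each $y$ in $F(K)$ outside this small exceptional set has $F^{-1}(y)$ equal to the unique point of $K$ mapped to it, i.e.\ $\#F^{-1}(y)=1$; and such $y$ form a set of measure $\ge c-O(1+\omega(1))$, which is positive once $c$ was chosen large enough. Hence $F=g+h\in\mathcal A$, producing the desired $h$ and the contradiction.

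The heart of the argument, and the step needing the most care, is the simultaneous fulfilment of (a)--(c). There is a genuine tension: (b) wants the jumps $h(b_i)-h(a_i)=\eta_{k(i)}$ small (so that $F(K)$ captures almost all of the height $H$), while (a) and (c) want each level-$k$ jump of $h$ to dominate $\omega(\ell_k)$ (so that the $\omega$-bounded perturbation $g$ can neither reverse the order of $h$ along $K$ nor create more than a negligible set of repeated values). This is reconciled precisely because $h$ may be chosen \emph{after} $\mu$, hence after $\omega$: $K$ can be made thin enough that $\sum_k 2^k\omega(\ell_k)$ is not merely finite but small compared with the freely chosen, large height of $h$. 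The remaining points — that $\{y:\#F^{-1}(y)=1\}$ is exactly the $F$-image of $\{x:F^{-1}(F(x))=\{x\}\}$ and that on this set $F$ is forced to be monotone, and that the displayed $h$ is indeed a continuous increasing surjection — are routine and I would treat them only briefly.
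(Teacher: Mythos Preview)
The paper does not contain a proof of this statement: Theorem~\ref{t:nonshy} is quoted from the companion paper \cite{BDE} (``In \cite{BDE} we proved the following theorem'') and is used here only as an input to Theorems~\ref{t:npns} and~\ref{t:isoshy}. So there is no in-paper proof to compare your proposal against.

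That said, your argument is sound and in fact follows exactly the template the present paper uses elsewhere. Your opening reduction---assume shyness, pass to a compact set $L$ of positive $\mu$-measure, extract a common modulus of continuity, and then translate $L$ into $\iA$---is precisely Lemma~\ref{l:nshy} combined with Lemma~\ref{l:iK}, and your construction of a ``Cantor--Lebesgue'' function $h$ whose increments across the gaps of a rapidly shrinking Cantor set dominate the modulus $\omega$ is the same mechanism as in Lemma~\ref{l:ih} and the proof of Theorem~\ref{t:infderiv}. The three-step analysis (a)--(c) is correct: (a) gives injectivity of $F=g+h$ on $K$, (b) shows $\lambda(F(K))\ge c-O(1+\omega(1))$ via the telescoping identity $\sum_k 2^k\eta_k=H-c$, and (c) bounds the overlap $F(K)\cap F([0,1]\setminus K)$ by $O\big(\sum_k 2^k\omega(\ell_k)\big)=O(1)$. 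Choosing $c$ large then gives a positive-measure set of singleton levels, as required.

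One cosmetic point worth tightening in a write-up: make explicit that at level $k$ there are $2^k$ gaps of width $\eta_k$ (so that the telescoping sum in (b) and the bound in (c) are unambiguous), and record that with $\delta_k=c\,2^{-k}(1+\varepsilon_k)$ one gets $\eta_k=c\,2^{-k}(\varepsilon_k-\varepsilon_{k+1})$, whence the constraint $\eta_k>\omega(\ell_k)$ and $\varepsilon_k\downarrow 0$ are simultaneously satisfiable precisely because $\sum_k 2^k\omega(\ell_k)<\infty$. These are routine, as you say, but they are where a careless version of this argument could slip.
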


For $f\in C[0,1]$ let $\lambda_f=\lambda\circ f^{-1}$ be \emph{the occupation measure corresponding to $f$}.
In Theorem~\ref{t:nonshy} one cannot replace Lebesgue measure with occupation measure, because we showed in \cite{BDE} that for the
prevalent $f\in C[0,1]$ for $\lambda_f$ almost every $y\in \mathbb{R}$ the level set $f^{-1}(y)$ has Hausdorff dimension $1$. The following theorem
yields that some of these level sets are not perfect.

\begin{isoshy} The set
$$\left\{f\in C[0,1]: \exists^{\lambda_f} y\in \mathbb{R} \textrm{ such that } f^{-1}(y) \textrm{ is not perfect}\right\}$$
is Haar ambivalent in $C[0,1]$.
\end{isoshy}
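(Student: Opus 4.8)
The plan is to prove that $\iA:=\bigl\{f\in C[0,1]:\exists^{\lambda_f}y,\ f^{-1}(y)\text{ is not perfect}\bigr\}$ is neither prevalent nor shy. First note that, since each level set is compact, ``$f^{-1}(y)$ is not perfect'' means ``$f^{-1}(y)$ is nonempty and has an isolated point'', and since $\lambda_f(A)=\lambda(f^{-1}(A))$ for every $A\subseteq\RR$,
\[
\lambda_f\bigl(\{y:f^{-1}(y)\text{ not perfect}\}\bigr)=\lambda\bigl(\{x\in[0,1]:f^{-1}(f(x))\text{ has an isolated point}\}\bigr),
\]
so membership in $\iA$ is a statement about a positive-measure set of points $x$. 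It is also useful to observe that the set $\{y:f^{-1}(y)\text{ has an isolated point}\}$ is contained in the (countable) set of strict local extremal values of $f$ together with $\{f(0),f(1)\}$ and $f\bigl(\operatorname{Incr}(f)\cup\operatorname{Decr}(f)\bigr)$, where $\operatorname{Incr}(f)$ and $\operatorname{Decr}(f)$ denote the sets of points of strict one-sided monotone crossing.

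\smallskip
\emph{$\iA$ is not prevalent.} It suffices to exhibit a non-shy subset of $C[0,1]\setminus\iA$. Let
\[
R=\{f\in C[0,1]: f\text{ has the Bruckner--Garg property and }\lambda(f^{-1}(y))=0\text{ for all }y\in\RR\}.
\]
If $f\in R$, then by Theorem~\ref{t:BG} the only non-perfect level sets of $f$ are the two singletons $f^{-1}(\min f),f^{-1}(\max f)$ and the countably many sets $f^{-1}(y)$, $y\in D_f$, each a Cantor set together with an isolated point; hence $\{y:f^{-1}(y)\text{ not perfect}\}=\{\min f,\max f\}\cup D_f$ is countable and carries $\lambda_f$-mass $\sum_{y\in\{\min f,\max f\}\cup D_f}\lambda(f^{-1}(y))=0$. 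Thus $R\subseteq C[0,1]\setminus\iA$. Now the set of $f$ with the Bruckner--Garg property is non-shy by Theorem~\ref{t:npns}, while the set of $f$ some of whose level sets has positive Lebesgue measure is shy (a witness is the Wiener measure, since for every $g\in C[0,1]$ the occupation measure $\lambda_{B-g}$ is a.s.\ non-atomic, as $\mathbb{E}\,\lambda(\{t:B(t)-g(t)=c\})=0$ for each fixed $c$). Since shy sets form a $\sigma$-ideal, $R$, being the Bruckner--Garg set with a shy set removed, is non-shy; hence $\iA$ is not prevalent.

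\smallskip
\emph{$\iA$ is not shy --- the main obstacle.} By a standard Fubini argument it suffices to produce a Borel probability measure $\mu$ on $C[0,1]$ with $\mu(\iA+g)>0$ for every $g\in C[0,1]$, i.e.\ a random $X\in C[0,1]$ such that for every fixed continuous $g$, with positive probability the occupation measure $\lambda_{X-g}$ charges the set of non-perfect level sets of $X-g$. This is the delicate part, because the naive sources of a non-perfect level set of positive occupation mass are \emph{not} stable under perturbation: a plateau of $f$ at a value also attained at an isolated point yields an atom of $\lambda_f$ on a non-perfect level set but is destroyed once one subtracts any function that is non-constant on it; a segment of strict monotonicity yields a positive $\lambda_f$-mass of singleton level sets but is destroyed by subtracting a function of larger local oscillation; and Brownian-type probes are useless here, since $B-g$ a.s.\ has perfect level sets except for the two one-point extremal ones (which carry occupation mass $0$), whence $B-g\notin\iA$ a.s. In fact one checks that the subsets $\{f:\lambda_f\text{ has an atom}\}$ and $\{f:\lambda(\operatorname{Incr}(f)\cup\operatorname{Decr}(f))>0\}$ of $\iA$ are themselves shy, so the required witness must exploit the remaining possibility: functions $f$ for which $f$ maps the (possibly Lebesgue-null) set $\operatorname{Incr}(f)\cup\operatorname{Decr}(f)$ onto a set of values whose full $f$-preimage has positive measure. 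I would build $\mu$ from a suitably randomized self-similar construction, in the spirit of the constructions of \cite{BDE} behind Theorem~\ref{t:nonshy} and the construction behind Theorem~\ref{t:npns}, arranged so that for \emph{every} perturbation $g$ the function $X-g$ retains, with positive probability, a positive-measure family of values at which the level set has an isolated point. Verifying that this feature survives subtraction of an arbitrary continuous function --- a perturbation that may itself be very rough --- is precisely the crux of the argument, and is where I expect the main technical work to lie.
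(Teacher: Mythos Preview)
Your ``not prevalent'' argument is correct and close to the paper's: Bruckner--Garg functions have only countably many non-perfect level values, and after intersecting with the prevalent set of $f$ whose occupation measure is non-atomic (which you obtain from the Wiener measure, and which in the paper follows from Theorem~\ref{t:occ}), one gets a non-shy subset of $C[0,1]\setminus\iA$.

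The ``not shy'' part, however, is not a proof. You correctly identify that direct probes (plateaus, monotone stretches, Brownian motion) do not work, but your proposed remedy---building a new witness measure via a ``randomized self-similar construction''---remains a plan with no construction and no verification; you yourself flag the crux as undone. There is a genuine gap here.

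The paper avoids constructing any new measure. The key observation is that non-shyness is already available from Theorem~\ref{t:nonshy}: restricting to the half-interval $[0,1/2)$, the set $\iB=\{f:\lambda(S_f)>0\}$ is non-shy, where $S_f=\{y:f^{-1}(y)\cap[0,1/2)\text{ is a singleton}\}$. Intersecting with the prevalent set $\iC=\{f:\lambda_f\ll\lambda\}$ (Theorem~\ref{t:occ}) keeps non-shyness. For $f\in\iB\cap\iC$, the Lebesgue Density Theorem gives $\lambda(\RR\setminus(S_f+\QQ))=0$, hence $\lambda_f(S_f+\QQ)=1$, so some rational shift $q(f)$ satisfies $\lambda\bigl(f^{-1}(S_f+q(f))\cap[2/3,1]\bigr)>0$. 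A $\sigma$-ideal argument fixes $q(f)=q$ on a non-shy set $\iD$. Finally, translate by the single function $g$ that is $0$ on $[0,1/2]$ and $q$ on $[2/3,1]$: for $f\in\iD$ one checks $S_{f-g}=S_f$ and $(f-g)^{-1}(S_{f-g})\cap[2/3,1]=f^{-1}(S_f+q)\cap[2/3,1]$, so $\lambda_{f-g}(S_{f-g})>0$ and $f-g\in\iA$. Thus $\iD-g\subset\iA$ is non-shy. The whole argument piggybacks on Theorem~\ref{t:nonshy} rather than reproving a witness-measure statement from scratch; this is exactly the idea your sketch is missing.
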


Next we consider maps from an uncountable compact metric space $K$ to $\mathbb{R}^d$. Let us denote by $C(K,\mathbb{R}^d)$ the set of continuous functions from $K$ to $\mathbb{R}^d$ endowed with the supremum metric. Prevalent continuous maps have many fibers of cardinality continuum, the following theorem is essentially
\cite[Theorem~11]{D} and the remark following its proof.

\begin{theorem}[Dougherty] \label{t:D} Let $K$ be the middle-third Cantor set and let $d\in \mathbb{N}^+$.
Then for the prevalent $f\in C(K,\mathbb{R}^d)$ there exists a non-empty open set $U_f\subset \mathbb{R}^d$ such that for all $y\in U_f$
$$\# f^{-1}(y)=2^{\aleph_0}.$$ \end{theorem}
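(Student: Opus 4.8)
The plan is to prove that for the prevalent $f\in C(K,\RR^d)$, where $K$ is the middle-third Cantor set, there is a non-empty open $U_f\subset\RR^d$ with $\#f^{-1}(y)=2^{\aleph_0}$ for all $y\in U_f$. The natural strategy is to produce a finite-dimensional probe measure $\mu$ on $C(K,\RR^d)$ — that is, the distribution of a random perturbation $g+P$ where $g\in C(K,\RR^d)$ is arbitrary and $P$ ranges over a compactly-supported finitely-parametrized family — and to show that for \emph{every} $g$, for $\mu$-almost every $P$ the function $f=g+P$ has the desired property. Since Christensen's Fubini-type argument shows that a countable union of sets killed by a common probe measure is shy, and here we want the \emph{complement} to be shy, it suffices to exhibit one Borel probe measure $\mu$ witnessing that $\{f: f \text{ fails the property}\}$ is shy; equivalently, for each $g$, $\mu$-a.e.\ translate $g+P$ has a whole open set of fibers of size continuum.

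The key construction is as follows. Write $K=\prod_{n\ge 1}\{0,1\}$ (identifying the Cantor set with the product space), and split the coordinates into two infinite blocks, $K\cong K_1\times K_2$, so that every point of $K$ is a pair $(x_1,x_2)$. I would take the probe to be a random map $P$ depending only on the $K_1$-coordinate, chosen so that $P\colon K_1\to\RR^d$ is a random continuous surjection onto a non-degenerate cube $Q\subset\RR^d$ — for instance a suitably normalized independent Fourier-type or Schauder-type series with random coefficients, arranged so that with probability one the image of $P$ contains a fixed open box $V$. The point is that $P$ is constant in the $K_2$-direction, so for the perturbed function $f=g+P$ and any value $y$, the fiber $f^{-1}(y)$ contains, for each $x_1$ with $P(x_1)$ close to $y$, the whole slice $\{x_1\}\times\{x_2: g(x_1,x_2)=y-P(x_1)\}$; but that is the wrong direction. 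Instead the correct mechanism is the reverse: fix $x_2$ and vary $x_1$. Because $P(x_1)$ alone already sweeps out an open set $V$ as $x_1$ ranges over $K_1$, and $g$ is uniformly continuous, one shows that for $y$ in a slightly smaller open box $U_f\subset V$ the equation $g(x_1,x_2)+P(x_1)=y$ has, for a suitable fixed $x_2$, a solution set in $x_1$ that is a non-empty perfect (hence size-continuum) subset of $K_1$ — this is where one uses that $P$ is a "fat" random surjection whose level sets are generically perfect, a property one can build into the random series by a standard Baire-category-in-the-coefficient-space plus measure argument, or directly by choosing $P$ to factor through a random nowhere-locally-injective map.

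The main obstacle I anticipate is exactly this last point: arranging that the random probe $P$ has, almost surely, the property that for every $g$ there is an open set of values $y$ over which $\{x_1: P(x_1)=y-g(x_1,x_2)\}$ is perfect for some $x_2$, rather than merely non-empty or finite. The clean way around it is to decouple the "surjectivity" and the "perfectness" into two further independent blocks of Cantor coordinates: use block $K_1$ to carry a random map hitting an open set $V$ robustly (so small perturbations by $g$ still leave the image containing an open $U_f$, by an open-mapping/degree or just a compactness-and-uniform-continuity estimate), and use a third block $K_3$ on which $P$ is genuinely independent of everything, contributing a Cantor set worth of points to each fiber "for free" once one solution is found. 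Concretely: if $f$ restricted to $K_1\times K_3$ (with $K_2$ fixed) has the form $P_1(x_1)+P_3(x_3)+g(x_1,x_2,x_3)$ with $P_3$ constant $\equiv 0$ after rescaling — no; rather let $P$ ignore $K_3$ entirely, so that $f(x_1,x_2,x_3)=f(x_1,x_2,x_3')$ is false in general because of $g$. This shows the honest fix is: choose $P$ to depend on $x_1$ only, show the image of $x_1\mapsto P(x_1)$ contains an open box and that $x_1\mapsto P(x_1)$ is nowhere locally injective almost surely (a \emph{generic}/random continuous function $K_1\to\RR^d$ is space-filling-like enough that its fibers over an open set are perfect), then note $f^{-1}(y)\supset\big(\{x_1: P(x_1)+g(x_1,x_2^*)=y\}\big)\times\{x_2^*\}$ for a well-chosen $x_2^*$ and conclude via a continuity-in-$x_2$ argument that this fiber-in-$x_1$ stays perfect and non-empty for all $y$ in an open $U_f$. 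Verifying the "fibers generically perfect" claim for the random model — i.e.\ that one can simultaneously control the image and the local structure of the level sets — is the technical heart, and I would handle it by an explicit Cantor-scheme construction of the random $P$ (branching the construction so that at every scale each potential fiber splits), which makes both the open-image and the perfect-fiber properties manifest with probability one.
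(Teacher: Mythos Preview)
First, note that the paper does not give its own proof of Theorem~\ref{t:D}: it is quoted from Dougherty \cite{D} (Theorem~11 there together with the remark following its proof), so there is no argument in the present paper to compare yours against line by line.

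That said, your proposal has a genuine gap, and it is exactly the step you yourself flag as ``the main obstacle.'' The overall architecture---a finite-dimensional probe, a splitting $K\cong K_1\times K_2$, and a probe $P$ depending only on the $K_1$-coordinate---is the right circle of ideas and is indeed what Dougherty uses. The problem is your mechanism for producing an open set of large fibers. You want, for every $g$ and probe-a.e.\ $P$, that for some (or every) fixed $x_2$ the map $x_1\mapsto g(x_1,x_2)+P(x_1)$ hits every point of a fixed open set, with perfect fibers. But $K_1$ is zero-dimensional, so \emph{no} continuous map $K_1\to\RR^d$ is a stable surjection in the sense you need: any continuous $P\colon K_1\to\RR^d$ is a uniform limit of maps with finite image, hence for every $P$ and every $\varepsilon>0$ there is a perturbation of sup-norm $<\varepsilon$ collapsing the image of $P$ to a finite set. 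Your appeals to ``open-mapping/degree'' arguments are therefore unavailable here, and the assertion that a random $P$ has, robustly under bounded perturbation, perfect fibers over an open set is simply false for any individual $P$. Randomising $P$ does not rescue this directly, because you need the conclusion to hold simultaneously for the uncountable family of perturbations $\{g(\cdot,x_2):x_2\in K_2\}$, and a Fubini swap is unjustified.

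Your text in fact oscillates among several inequivalent plans (two blocks, three blocks, then back to two), and the closing sentence---an ``explicit Cantor-scheme construction'' that makes both properties ``manifest with probability one''---is a hope, not an argument. What is missing is a concrete device that, for a \emph{given} $g$, forces the images of all the slice maps $x_1\mapsto g(x_1,x_2)+P(x_1)$ to share a common nonempty open set without relying on a stability property that zero-dimensional domains do not possess. Dougherty's actual construction supplies exactly such a device; you should consult \cite[Theorem~11]{D} for the details rather than try to improvise around the obstacle.
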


Applying the above theorem we will show that generic fibers are perfect.

\begin{perfect} Let $K$ be a compact metric space without isolated points and let $d\in \mathbb{N}^+$. Then for the prevalent $f\in C(K,\mathbb{R}^d)$ for the generic $y\in f(K)$
$$f^{-1}(y) \textrm{ is perfect}.$$
\end{perfect}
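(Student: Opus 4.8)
The plan is to combine Dougherty's Theorem~\ref{t:D} with a Baire category argument carried out \emph{inside} the range of a prevalent map. First I would reduce to the Cantor set: since $K$ is a compact metric space without isolated points, it contains a copy $C$ of the middle-third Cantor set, and a standard Fubini-type / retraction argument lets one transfer a prevalence statement about $C(C,\RR^d)$ to one about $C(K,\RR^d)$ (restriction to $C$ is a continuous linear surjection, and the measures witnessing prevalence can be pushed/pulled along a continuous surjective map $K\to C$ together with a section). So it suffices to prove the statement for $K=C$ the middle-third Cantor set.

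Next, fix the prevalent set $\iG\subset C(C,\RR^d)$ supplied by Theorem~\ref{t:D}, so that for every $f\in\iG$ there is a non-empty open $U_f\subset\RR^d$ with $\#f^{-1}(y)=2^{\aleph_0}$ for all $y\in U_f$. I claim this already forces the conclusion. Indeed, for a fixed $f\in\iG$ consider the compact metric space $Y_f=f(C)$ with the generic-point quantifier referring to the Baire category of $Y_f$. Let $G_f=\{y\in Y_f : f^{-1}(y)\textrm{ has no isolated point}\}$; I want to show $G_f$ is comeager in $Y_f$. It is a routine check that $G_f$ is a $G_\delta$ subset of $Y_f$: writing $C=\bigcup_n C_n$ as a countable union of clopen pieces of diameter $\to 0$, a point $y$ fails to be in $G_f$ iff for some basic clopen piece $V$ meeting $f^{-1}(y)$ we have $f^{-1}(y)\cap V$ a singleton, and the set of such $y$ is an $F_\sigma$; I would spell this out with the clopen basis of $C$. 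So $G_f$ is $G_\delta$, and density of $G_f$ in $Y_f$ is all that remains.

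For density, fix a non-empty relatively open $W\subset Y_f$; I must produce $y\in W$ with $f^{-1}(y)$ perfect. Pick any $y_0\in W$ and a clopen piece $V\subset C$ with $f(V)\subset W$ (possible since $f$ is continuous and $C$ has a clopen basis, choosing $V$ small around a preimage of $y_0$). Now apply Theorem~\ref{t:D} \emph{to the restriction} $f|_V$, viewing $V$ itself as a copy of the middle-third Cantor set: but here is the subtlety — Theorem~\ref{t:D} is a prevalence statement, so it says nothing about the \emph{particular} function $f|_V$. The fix is to instead use Dougherty's theorem relativized within the original prevalence argument: I would strengthen the input by noting that the prevalent set $\iG$ can be chosen so that $\#f^{-1}(y)=2^{\aleph_0}$ holds for $y$ in a non-empty open subset of $f(V)$ for \emph{every} clopen $V\subset C$ simultaneously (there are only countably many such $V$, and prevalent sets are closed under countable intersection, so intersecting the countably many prevalent sets obtained by applying Theorem~\ref{t:D} to each $V$-coordinate gives a single prevalent $\iG'$ with this property). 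For $f\in\iG'$ and any clopen $V$, the open set $U_{f,V}\subset f(V)$ on which fibers have size continuum is non-empty; pick $y\in U_{f,V}\cap W$. Then $f^{-1}(y)$ meets $V$, and more importantly every point of $f^{-1}(y)$ lies in some clopen piece $V'$ on which, re-running the argument, $f^{-1}(y)\cap V'$ is again uncountable, hence non-empty and not isolated — so $f^{-1}(y)$ is perfect, and $y\in W$. This establishes density of $G_f$, hence $G_f$ is comeager in $Y_f=f(K)$, which is exactly the assertion.

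The main obstacle, and the step I would be most careful about, is precisely this passage from the single non-empty open set $U_f$ furnished by Theorem~\ref{t:D} to a \emph{locally} pervasive statement — i.e.\ ensuring that the set of $y$ with perfect fiber is dense in the whole range, not merely non-empty. The resolution above (intersecting countably many prevalent sets, one per clopen piece of the Cantor set, and then using that a point of a fiber failing to be isolated can be detected on an arbitrarily small clopen neighborhood) is the heart of the proof; the transfer from $K$ to the Cantor set and the $G_\delta$ computation for $G_f$ are comparatively routine. One should also double-check that $f(K)$ is itself a non-degenerate compact metric space so that ``generic $y\in f(K)$'' is meaningful — if $f$ is constant on $K$ this is vacuous, but such $f$ form a shy (indeed nowhere dense) set, so this case can be discarded at the outset.
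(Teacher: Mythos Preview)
Your overall strategy---apply Dougherty's theorem to each member of a countable basis, intersect the resulting countably many prevalent sets, and then argue that for $f$ in the intersection the set of $y$ with perfect fiber is comeager in $f(K)$---is exactly the paper's approach. But two steps in your execution do not go through.

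First, the reduction to a single Cantor set $C\subset K$ is not valid. Pulling a prevalent set back along the restriction $R\colon C(K,\RR^d)\to C(C,\RR^d)$ only tells you that $f|_C$ has the desired property: for generic $y\in f(C)$ the set $(f|_C)^{-1}(y)$ is perfect. It says nothing about $f^{-1}(y)$ for $y\in f(K)\setminus f(C)$, nor about points of $f^{-1}(y)$ lying in $K\setminus C$. The paper does not reduce to a single Cantor set; it works directly with a countable basis $\{V_n\}$ of $K$, sets $K_n=\cl V_n$, and uses that each $K_n$ is perfect (hence uncountable) to apply Dougherty there.

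Second, your density argument for $G_f$ has a gap. You choose $y\in U_{f,V}$ for one clopen $V$ and then write that for every clopen $V'$ meeting $f^{-1}(y)$, ``re-running the argument'' shows $f^{-1}(y)\cap V'$ is uncountable. But $y$ was selected only in $U_{f,V}$; there is no reason $y\in U_{f,V'}$, so you cannot conclude anything about $f^{-1}(y)\cap V'$. The paper's fix is to avoid proving density of $G_f$ directly and instead exhibit an explicit comeager subset: after a first pass one upgrades Dougherty's ``non-empty open $U_f$'' to ``dense open $W_f\subset f(K)$'' (by intersecting over the basis), then applies this upgraded statement to each $K_n$ to obtain dense open $W_{f|_{K_n}}\subset f(K_n)$, and finally sets
\[
W_f=\bigcap_{n} \bigl(W_{f|_{K_n}}\cup (f(K)\setminus f(K_n))\bigr),
\]
a countable intersection of dense relatively open subsets of $f(K)$. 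For $y\in W_f$ and every $n$, one has either $f^{-1}(y)\cap K_n=\emptyset$ or $\#(f^{-1}(y)\cap K_n)=2^{\aleph_0}$; in particular $f^{-1}(y)\cap K_n$ is never a singleton, so $f^{-1}(y)$ has no isolated point. This is the step your ``re-running the argument'' was gesturing at, but it requires taking the intersection \emph{before} choosing $y$, not after.
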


\begin{cperfect} For the prevalent $f\in C[0,1]$ for the generic
$y\in f([0,1])$
$$f^{-1}(y) \textrm{ is perfect}.$$
\end{cperfect}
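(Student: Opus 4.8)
The plan is to read off Corollary~\ref{c:perfect} as the special case $K=[0,1]$, $d=1$ of Theorem~\ref{t:perfect}. First I would check the hypothesis: $[0,1]$ is a compact metric space with no isolated points, so it is an admissible choice of $K$. Second, $C([0,1],\RR^1)=C([0,1],\RR)$ is by definition the Banach space $C[0,1]$, and the supremum metric on $C(K,\RR^d)$ restricts to the usual supremum metric on $C[0,1]$, so prevalence in the two spaces is the same notion. Theorem~\ref{t:perfect} then says precisely that for the prevalent $f\in C[0,1]$ the fiber $f^{-1}(y)$ is perfect for the generic $y\in f([0,1])$, which is the assertion; nothing further is needed.

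Thus the corollary has no difficulty of its own; all the content sits in Theorem~\ref{t:perfect}, which is where I would expect the work. My plan there would roughly be: (i) use the classical fact that a compact metric space without isolated points contains a topological Cantor set $C$, and transfer Dougherty's Theorem~\ref{t:D} from $C$ to $K$ (extending perturbations supported near $C$ via Tietze and pushing forward the probe measure) to obtain that for the prevalent $f\in C(K,\RR^d)$ there is a non-empty open $U_f\subset\RR^d$ with $f^{-1}(y)$ uncountable for every $y\in U_f$; and (ii) upgrade ``uncountable on an open set of values'' to ``perfect for a generic value'' by a Baire-category argument in the range, showing that for the prevalent $f$ the set of $y\in f(K)$ for which $f^{-1}(y)$ has an isolated point is meager in $f(K)$ — for instance by covering it with countably many closed nowhere dense sets, one for each pair of rational balls that could witness an isolated point in some fiber. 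I expect step (ii), the interplay between perturbing $f$ and varying $y$, to be the real obstacle; step (i) and the deduction of the corollary itself are routine.
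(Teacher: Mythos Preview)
Your proposal is correct and matches the paper's approach exactly: the paper states Corollary~\ref{c:perfect} with no proof, since it is the immediate special case $K=[0,1]$, $d=1$ of Theorem~\ref{t:perfect}. Your additional sketch of how Theorem~\ref{t:perfect} itself is proved is also broadly in line with the paper's argument, which uses Dougherty's theorem on each closure of a basic open set (via Corollary~\ref{c:hereditary}) and then a countable intersection to get perfect fibers for generic $y$.
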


In \cite{DW} the following theorem was proved.

\begin{theorem}[Darji--White] Let $P \subseteq [0,1]$ be perfect. Then the set
\[\mathcal{D} _P = \{f \in C[0,1]: \forall x \in P, \ f'(x) =\infty \textrm{ for all } x\in P  \}
\]
is shy.
\end{theorem}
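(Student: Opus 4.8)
The plan is to produce a single Borel probability measure $\mu$ on $C[0,1]$ witnessing that $\iD_P$ is Haar null, i.e.\ with $\mu(\iD_P+f)=0$ for every $f\in C[0,1]$. Writing a $\mu$-random function as $g$, this amounts to showing that for each fixed $f$, almost surely $g-f\notin\iD_P$, that is, there is at least one $x\in P$ at which $g-f$ fails to have derivative $+\infty$. Two preliminary remarks shape the search for $\mu$. First, $\iD_P$ is a convex cone that is invariant under translation by any $C^1$ function (adding a function with finite derivatives everywhere changes no infinite derivative), so $\mu$ must be supported on genuinely non-smooth functions; a probe supported on smooth functions is useless. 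Second, $\iD_P$ is only coanalytic in general, but this causes no difficulty: it suffices to exhibit $\mu$ killing the translates of a Borel (indeed analytic) superset, and the stratification below produces such sets.

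I would first reduce the infinite-slope condition to a bounded one at a fixed scale. If $f\in\iD_P$ then for every $x\in P$ the difference quotients exceed $1$ for all sufficiently small $|h|$, so $x$ lies in $A_n(f)=\{x\in P:\ f(x+h)-f(x)>h \text{ for } 0<h<1/n,\ \text{and } f(x+h)-f(x)<h \text{ for } -1/n<h<0\}$ for some $n$ (restricting $h$ to keep $x+h\in[0,1]$, and using the one remaining side at the endpoints). Hence $P=\bigcup_n A_n(f)$, and since $P$ is uncountable some $A_n(f)\neq\emptyset$. Therefore
\[
\iD_P\subseteq\bigcup_{n\in\NN}\iF_n,\qquad \iF_n=\Big\{h\in C[0,1]:\ \exists x\in P\ \forall t\in(0,1/n)\ h(x+t)-h(x)>t\Big\},
\]
where I keep only the right-hand inequality for definiteness. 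Each $\iF_n$ is analytic, and since the shy sets form a $\sigma$-ideal (Christensen), it is enough to prove that every $\iF_n$ is shy.

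Fix $n$ and $f$. A function $h$ lies in $\iF_n$ precisely when it has a \emph{witness} $x\in P$: a point from which $h$ stays strictly above the slope-$1$ line through $(x,h(x))$ on the whole window $(x,x+1/n)$. The way to destroy all witnesses of $g-f$ is to force a macroscopic decrease of $g-f$ inside every such window. Concretely, suppose the probe $g$ has, within every window of length $1/n$, a ``downward stretch'' $[a,b]$ over which $g$ drops by more than $\omega_f(b-a)$, the oscillation of $f$ on an interval of that length. Then $(g-f)(b)-(g-f)(a)<0$, whereas a witness $x\le a<b\le x+1/n$ would force $(g-f)(b)-(g-f)(a)>b-a>0$, a contradiction. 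So $g-f$ has no witness in $P$ and $g\notin\iF_n+f$. A natural first candidate for $g$ is a steep sawtooth of period $<1/(2n)$ (so that every window contains a full downward stretch), randomised in phase, amplitude and scale.

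The main obstacle is that this mechanism needs the probe's drop over a stretch of length $\ell$ to exceed $\omega_f(\ell)$, but a single continuous function has a fixed modulus of continuity while $f$ may be arbitrarily rough ($\omega_f$ can tend to $0$ as slowly as one likes); by uniform continuity the probe's own drops over length $\ell$ also tend to $0$, so no deterministic probe can beat every $f$ at fine scales, and a one-parameter amplitude family fails because the small-amplitude parameters form a bad set of positive measure. The crux of the proof is therefore a genuinely randomised, infinite-dimensional perturbation: place independent steep downward spikes at many scales and at random locations, and show by a Borel--Cantelli / Fubini estimate that for each fixed $f$ the event ``$g-f$ has a witness in $P$'' has probability $0$. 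Here perfectness of $P$ is essential: it furnishes uncountably many near-independent locations at which a good point can occur, which is what drives the probability of a surviving witness to zero. Carrying out this probabilistic estimate --- in particular choosing the spike amplitudes and scales so that continuity is preserved while the drop beats $\omega_f$ at some scale near $\mu$-almost every candidate point of $P$ --- is the technical heart of the argument.
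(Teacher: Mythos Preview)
The paper does not prove this theorem; it is quoted from \cite{DW} as background for Theorem~\ref{t:infderiv}, so there is no proof in the paper to compare against.

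Your proposal, however, has a genuine gap. The containment $\iD_P \subseteq \bigcup_n \iF_n$ is correct, but the sets $\iF_n$ are \emph{not} shy, so the $\sigma$-ideal argument cannot conclude. Indeed, fix any $x_0 \in P$ with $x_0 + 1/n \le 1$ and apply Lemma~\ref{l:nshy}: given a compact $\iK \subset C[0,1]$, let $h$ be the common modulus of continuity from Lemma~\ref{l:iK} and put $g(x) = 2h(x - x_0) + 2(x - x_0)$ on $[x_0, x_0 + 1/n]$, extended continuously to $[0,1]$. For every $f \in \iK$ and every $t \in (0, 1/n)$ one has $(f+g)(x_0+t) - (f+g)(x_0) > -h(t) + 2h(t) + 2t > t$, so $x_0$ is a witness for $f+g \in \iF_n$; hence $\iK + g \subset \iF_n$ and $\iF_n$ is non-shy.

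The underlying error is that your decomposition trades the quantifier ``for \emph{all} $x \in P$'' in $\iD_P$ for ``there \emph{exists} $x \in P$'' in $\iF_n$, discarding the perfectness of $P$ entirely (your $\iF_n$ is unchanged if $P$ is replaced by the singleton $\{x_0\}$). A single witness point can always be manufactured by a sufficiently steep translate---this is precisely the mechanism behind the non-shyness half of Theorem~\ref{t:infderiv}. Any correct argument must retain the ``for all $x\in P$'' structure and exploit that $P$ is uncountable; your final paragraph seems to sense this, but by that stage the decomposition has already thrown the relevant information away, and the sketched Borel--Cantelli estimate cannot recover it.
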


Darji and White asked in \cite{DW} whether the above theorem holds
if $P$ is allowed to vary with $f$. We answer their question in the negative.

\begin{infderiv}The set
\[\mathcal{D} = \{f \in C[0,1]: \exists \text{ a perfect set } P_f  \text { such that } f'(x) = \infty \textrm{ for all } x\in P_f \}
\] is Haar ambivalent.
\end{infderiv}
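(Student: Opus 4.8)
The statement has two halves: $\iD$ is not shy, and $\iD$ is not prevalent. I expect the non-prevalent half to be the easier one, essentially a consequence of the Darji--White theorem together with a "direction-of-genericity" observation, while the non-shy half is the substantial part and the main obstacle.

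\textbf{$\iD$ is not prevalent.} I would show the complement $C[0,1]\setminus\iD$ is not shy, i.e. that $\iD$ does not contain a prevalent set; equivalently I want to find a nonzero Borel probability measure $\mu$ (a "test measure" / transverse measure) such that for every $f\in C[0,1]$, $\mu(\iD - f)=0$, i.e. $\mu(\{g : f+g\in\iD\})=0$. The natural candidate is a measure supported on Lipschitz functions, or more concretely the distribution of a suitably scaled smooth random function; better yet, use a one-parameter probe: for fixed $f$, consider the line $\{f+t g_0 : t\in\RR\}$ where $g_0$ is, say, $g_0(x)=x$ (or any fixed $C^1$ function with bounded derivative), equipped with a compactly supported probability measure on $t$. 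If $f+t_1 g_0$ and $f+t_2 g_0$ both had infinite derivative on perfect sets $P_1,P_2$, then on $P_1\cap$(anything) we would be subtracting: $(f+t_1g_0)-(f+t_2g_0)=(t_1-t_2)g_0$ is $C^1$, so $f+t_2g_0 = (f+t_1g_0) - (t_1-t_2)g_0$ would have infinite derivative on $P_1$ as well — this shows the set of "bad" $t$ is closed under the relevant operation but does not immediately bound it. The cleaner route: fix $f$ and note that $\{t : f+tg_0 \text{ has } f'=\infty \text{ somewhere on a perfect set}\}$ — apply the Darji--White theorem not to a fixed $P$ but observe that if this set of $t$ were of full measure, one could extract (by a measurable selection / uncountability argument) a single perfect $P$ working for a positive-measure set of $t$, contradicting that $\iD_P$ is shy for each fixed $P$. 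I would make this precise by a Fubini argument on $[0,1]\times(\text{parameter space})$ after reducing to countably many candidate perfect sets via a basis for closed subsets; this is where care is needed but no deep new idea.

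\textbf{$\iD$ is not shy.} This is the main obstacle. The strategy is the standard one for non-shyness in $C[0,1]$: exhibit a "large" witnessing set, typically by finding a compact (or $\sigma$-compact) set $\iK\subset C[0,1]$, a probability measure $\nu$ on a finite-dimensional "probe space", such that $\iK + f$ meets $\iD$ on a $\nu$-positive set for every $f$; in practice one builds, for an arbitrary given $f$, a translate of a fixed finite-dimensional cube that lands in $\iD$, using the freedom to add a perturbation. The concrete construction I would attempt: take the classical example of a continuous function $h$ with $h'(x)=+\infty$ on a prescribed perfect set — e.g. build $h$ as an infinite sum $\sum_n a_n \psi(b_n x)$ of scaled sawtooth/triangular bumps (Takagi-type or the Darji--White construction) so that the "infinite derivative on a Cantor set" property is \emph{stable under $C^1$-small perturbations and even under addition of \emph{any} bounded perturbation supported off a neighborhood of that Cantor set}. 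Then, given $f$, I would add to $f$ a rapidly oscillating correction $g$ (depending on $f$ through finitely many parameters, ranged over by $\nu$) that forces $f+g$ to have an infinite-slope behavior on some perfect set regardless of $f$'s own (merely continuous, hence only uniformly controlled at a given scale) wiggle; the key quantitative point is that at scale $\delta$, $f$ changes by at most its modulus of continuity $\omega_f(\delta)$, while the correction can be made to change by $\gg \omega_f(\delta)/\delta \to$ arbitrarily large difference quotients by choosing amplitudes $\gg \delta$ but still $\to 0$. The hard part is organizing this so that a \emph{single} measure $\nu$ on a fixed finite-dimensional space (independent of $f$) does the job — i.e. genuinely proving non-shyness rather than just that each translate of $\iD$ is nonempty; I would handle this by the now-standard trick of putting $\nu$ on a cube $[0,1]^N$ of coefficients of finitely many building blocks chosen at a single well-separated family of scales, showing the "good" set of coefficients has positive $\nu$-measure for every $f$ by a Borel--Cantelli / second-moment estimate on the difference quotients along a Cantor-like set of points.

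\textbf{Summary of the expected difficulty.} The non-prevalence half is a Fubini-plus-Darji--White bookkeeping argument. The non-shyness half requires a careful construction of a finite-parameter family of perturbations, uniform in $f$, whose difference quotients blow up along a perfect set — the main obstacle is getting this family to be genuinely \emph{finite}-dimensional (so that the witnessing measure lives on a fixed compact finite-dimensional set and the resulting "bad for $f$" set can be shown non-null via an explicit measure estimate), since the naive construction wants infinitely many scales and hence an infinite-dimensional, potentially shy, probe.
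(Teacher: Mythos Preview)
Your plan has the right instincts in places but misses the key simplifications on both halves.

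\textbf{Non-prevalence.} Your Fubini/measurable-selection route is unnecessarily elaborate, and the step ``reduce to countably many candidate perfect sets via a basis for closed subsets'' is not a valid reduction: the space of perfect subsets of $[0,1]$ is genuinely uncountable and there is no countable family through which every perfect set factors. The paper's argument is one line: if $f'(x)=\infty$ then $\frac{f(z)-f(x)}{z-x}>0$ for $z$ near $x$, so $f$ is non-decreasing at $x$; hence every $f\in\iD$ is monotone at some point. Thus the complement of $\iD$ contains the set of nowhere-monotone functions, which is non-shy by the Darji--White theorem (the same citation used for Lemma~\ref{l:notmon}). No probe, no Fubini, no selection.

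\textbf{Non-shyness.} You correctly isolate the key quantitative mechanism --- beat the modulus of continuity of $f$ at every scale --- but you then set yourself a false obstacle by insisting on a \emph{fixed finite-dimensional} probe measure independent of $f$. The paper avoids this entirely by using a different sufficient condition for non-shyness (Lemma~\ref{l:nshy}, due to Zaj\'{\i}\v{c}ek): a set $A$ is non-shy as soon as every compact $\iK\subset C[0,1]$ admits a translate $\iK+g\subset A$. Here $g$ is allowed to depend on $\iK$. Given a compact $\iK$, Arzel\`a--Ascoli supplies a single strictly increasing subadditive modulus $h$ dominating the oscillation of every $f\in\iK$ (Lemma~\ref{l:iK}); one then constructs a non-decreasing $g$ and a perfect $P$ with $\lim_{x\to p}\frac{g(x)-g(p)}{h(x-p)}=\infty$ for all $p\in P$ (Lemma~\ref{l:ih}, an explicit Cantor-type construction). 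Since $|f(x)-f(p)|<h(|x-p|)$ for all $f\in\iK$, the ratio $\frac{(f+g)(x)-(f+g)(p)}{x-p}$ is bounded below by $\bigl(-1+\frac{g(x)-g(p)}{h(x-p)}\bigr)\cdot\frac{h(x-p)}{x-p}\to\infty$, so $(f+g)'(p)=\infty$ on $P$ for every $f\in\iK$. No finite-dimensional probe, no second-moment estimate; the ``infinitely many scales'' that worried you are exactly what the construction uses, and that is fine because $g$ is a single function, not a measure.
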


It is also worth mentioning that Simon \cite{S} also considered a measure-theoretic counterpart of the Bruckner--Garg Theorem, and he proved that, almost surely, the linear Brownian motion defined on $[0,1]$ has the same level set structure as the generic $f\in C[0,1]$.

\section{Preliminaries}

Let $(X,d)$ be a metric space. For $x\in X$ and $r>0$ let $B(x,r)$ and $U(x,r)$ be the closed and open balls of
radius $r$ centered at $x$, respectively. For $A\subset X$ we denote by $\cl A$,
$\inter A$ and $\partial A$ the closure, interior and boundary of $A$, respectively.
The diameter of $A$ is denoted by $\diam A$.
For $A,B \subseteq X$ let us define $\dist(A,B) = \inf\{d(x,y) : x\in A,~y\in B\}$.

Let $X$ be a \emph{complete} metric space. A set is \emph{somewhere dense} if
it is dense in a non-empty open set, and otherwise it is called \emph{nowhere dense}. We say that $M \subset X$ is
\emph{meager} if it is a countable union of nowhere dense sets, and
a set is called \emph{co-meager} if its complement is meager. We say that the \emph{generic} element $x \in X$ has
property $\mathcal{P}$ if $\{x \in X : x \textrm{ has property }
\mathcal{P} \}$ is co-meager.
%A metric space $X$ is \emph{perfect} if it has no isolated points.
A metric space X is \emph{Polish} if it is complete and separable. See e.g. \cite{K} for
more on these concepts. A set is \emph{perfect} if it is closed and has no isolated points.

For a measure $\mu$ we use $\exists^{\mu}$ to denote positively many
with respect to $\mu$. Let $\lambda$ be the one-dimensional Lebesgue measure and for all $f\in C[0,1]$ let
$\lambda_f=\lambda \circ f^{-1}$ be the \emph{occupation measure corresponding to $f$}.

For the following lemma see \cite[Proposition~8.]{D}.

\begin{lemma}\label{l:D} Let $G,H$ be abelian Polish groups and let $\Phi \colon G\to H$ be a continuous epimorphism.
If $S\subset H$ is prevalent then so is $\Phi^{-1}(S)\subset G$.
\end{lemma}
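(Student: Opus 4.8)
The plan is to reduce the statement to the behaviour of witnessing measures and then to transport such a measure from $H$ to $G$ along a Borel section of $\Phi$. Writing the groups additively, recall that $S$ is prevalent in $H$ precisely when $N := H \setminus S$ is Haar null, and that $G \setminus \Phi^{-1}(S) = \Phi^{-1}(N)$; hence it suffices to show that $\Phi^{-1}(N)$ is Haar null in $G$. I would fix a Borel set $B \subset H$ with $N \subset B$ and a Borel probability measure $\nu$ on $H$ satisfying $\nu(B + y) = 0$ for every $y \in H$. Since $\Phi$ is continuous, $B' := \Phi^{-1}(B)$ is a Borel subset of $G$ containing $\Phi^{-1}(N)$, so $B'$ is the natural Borel hull, and the only remaining task is to produce a Borel probability measure $\mu$ on $G$ that annihilates every translate of $B'$.

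The key device is a Borel section $s \colon H \to G$ with $\Phi \circ s = \id_H$ (note that $s$ need not be a homomorphism). To obtain it I would first invoke the open mapping theorem for Polish groups: a continuous surjective homomorphism between Polish groups is open, so $\Phi(U)$ is open for every open $U \subset G$. The fibres $\Phi^{-1}(y)$ are cosets of the closed subgroup $\ker \Phi$, hence nonempty closed subsets of $G$, and the multifunction $y \mapsto \Phi^{-1}(y)$ is Borel measurable because $\{y : \Phi^{-1}(y) \cap U \neq \emptyset\} = \Phi(U)$ is open. The Kuratowski--Ryll-Nardzewski selection theorem then yields a Borel map $s \colon H \to G$ with $s(y) \in \Phi^{-1}(y)$, i.e.\ $\Phi(s(y)) = y$ for all $y \in H$. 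I then set $\mu := s_{*}\nu$, a Borel probability measure on $G$.

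It remains to verify the translation condition, and here the homomorphism property does all the work. For fixed $x \in G$ and arbitrary $y \in H$ one has $s(y) \in B' + x$ iff $\Phi(s(y) - x) \in B$ iff $y - \Phi(x) \in B$, using $\Phi(s(y)) = y$ together with the additivity of $\Phi$; hence $s^{-1}(B' + x) = B + \Phi(x)$. Consequently
\[
\mu(B' + x) = \nu\bigl(s^{-1}(B' + x)\bigr) = \nu\bigl(B + \Phi(x)\bigr) = 0
\]
for every $x \in G$, since $\nu$ vanishes on every translate of $B$. Thus $\mu$ witnesses that $\Phi^{-1}(N) \subset B'$ is Haar null, so $\Phi^{-1}(S)$ is prevalent, as required.

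I expect the main obstacle to be the construction of the Borel section $s$: the computation in the last paragraph is routine once $s$ is available, but producing a genuinely \emph{Borel} (not merely universally measurable) section is what forces the appeal to openness of $\Phi$. Without openness one only knows that $\Phi(U)$ is analytic, so the fibre multifunction is measurable only with respect to $\sigma(\boldsymbol{\Sigma}^1_1)$ and the selection theorem delivers a merely universally measurable $s$, whose pushforward need not be a Borel measure. The open mapping theorem for Polish groups is therefore the crucial input that upgrades the selection to a Borel one and keeps $\mu$ within the class of Borel probability measures demanded by the definition of Haar null.
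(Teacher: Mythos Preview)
Your argument is correct. The paper does not give its own proof of this lemma; it simply cites \cite[Prop.~8]{D}, and your approach is essentially Dougherty's: use the open mapping theorem for Polish groups together with a selection theorem to obtain a Borel section $s$ of $\Phi$, push the witnessing measure $\nu$ forward along $s$, and observe that $s^{-1}(\Phi^{-1}(B)+x)=B+\Phi(x)$.

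One minor remark on your final expository paragraph: the worry that a merely universally measurable section would fail to produce a Borel probability measure is slightly overstated. If $s$ is universally measurable, then for every Borel $A\subset G$ the set $s^{-1}(A)$ is universally measurable in $H$ and hence lies in the $\nu$-completion of the Borel $\sigma$-algebra, so $\mu(A):=\bar\nu(s^{-1}(A))$ still defines a bona fide Borel probability measure on $G$. Thus the upgrade to a Borel section is convenient but not strictly necessary; in any case this does not affect the validity of your proof.
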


The next corollary follows from Lemma~\ref{l:D} and the fact that the Tietze Extension Theorem holds in $\mathbb{R}^d$.
\begin{corollary} \label{c:hereditary} Let $K_1\subset K_2$ be compact metric spaces, let $d\in \mathbb{N}^+$ and
define
$$R\colon C(K_2,\mathbb{R}^d)\to C(K_1,\mathbb{R}^d), \quad R(f)=f|_{K_1}.$$
If $\mathcal{A} \subset C(K_1,\mathbb{R}^d)$ is prevalent then so is $R^{-1}(\mathcal{A})\subset C(K_2,\mathbb{R}^d)$.
\end{corollary}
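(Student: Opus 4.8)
The plan is to verify that the restriction map $R$ satisfies all the hypotheses of Lemma~\ref{l:D} and then simply invoke that lemma. First I would note that both $C(K_2,\RR^d)$ and $C(K_1,\RR^d)$, equipped with pointwise addition and the supremum metric, are separable Banach spaces, hence abelian Polish groups; here separability is inherited from compactness of $K_1$ and $K_2$ together with separability of $\RR^d$. Thus Lemma~\ref{l:D} will be applicable as soon as $R$ is shown to be a continuous epimorphism.

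Next I would check that $R$ is a continuous group homomorphism. Additivity is immediate, since $R(f+g)=(f+g)|_{K_1}=f|_{K_1}+g|_{K_1}=R(f)+R(g)$. Continuity follows because the inclusion $K_1\subset K_2$ gives
$$\sup_{x\in K_1}|f(x)-g(x)|\le \sup_{x\in K_2}|f(x)-g(x)|,$$
so $R$ is $1$-Lipschitz with respect to the supremum metrics, and in particular continuous.

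The only substantive point, and the step where the Tietze Extension Theorem enters, is the surjectivity of $R$. Since $K_1$ is a compact subset of the metric (hence normal) space $K_2$, it is closed in $K_2$. Applying the Tietze Extension Theorem to each of the $d$ coordinate functions, every $g=(g_1,\dots,g_d)\in C(K_1,\RR^d)$ extends to some $f=(f_1,\dots,f_d)\in C(K_2,\RR^d)$ with $f|_{K_1}=g$, that is, $R(f)=g$. Hence $R$ is onto, so it is a continuous epimorphism.

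Finally, taking $G=C(K_2,\RR^d)$, $H=C(K_1,\RR^d)$, $\Phi=R$, and $S=\iA$, Lemma~\ref{l:D} yields directly that $R^{-1}(\iA)$ is prevalent in $C(K_2,\RR^d)$, completing the proof. There is essentially no hard part beyond recognizing that Tietze supplies the surjectivity needed to make $R$ an epimorphism; the remaining verifications are routine.
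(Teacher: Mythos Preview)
Your proof is correct and follows exactly the route indicated in the paper: verify that the restriction map $R$ is a continuous group homomorphism, use the Tietze Extension Theorem (applied coordinatewise) to obtain surjectivity, and then invoke Lemma~\ref{l:D}. There is nothing to add.
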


The next theorem follows from Theorem~\ref{t:D}, Corollary~\ref{c:hereditary} and the fact  that every uncountable Polish space contains a
compact set homeomorphic to the middle-third Cantor set, see \cite[Cor.~6.5]{K}.

\begin{theorem} \label{t:D2}
Let $K$ be an uncountable compact metric space and let $d\in \mathbb{N}^+$.
Then for the prevalent $f\in C(K,\mathbb{R}^d)$ there exists a non-empty open set $U_f\subset \mathbb{R}^d$ such that for all $y\in U_f$
$$\# f^{-1}(y)=2^{\aleph_0}.$$
\end{theorem}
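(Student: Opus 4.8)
The plan is to reduce the statement to a local strengthening of Dougherty's theorem (Theorem \ref{t:D2}) and then package everything by a Baire-category argument in the range, using the fact that shy sets form a $\sigma$-ideal. Throughout I would fix a countable base $\{V_n\}$ of $K$ consisting of nonempty open balls and set $L_n=\cl V_n$; since $K$ has no isolated points, each $L_n$ is a perfect, hence uncountable, compact metric space.

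First I would record the elementary reduction. For $f\in C(K,\RR^d)$ call $y\in f(K)$ \emph{bad} if $f^{-1}(y)$ has an isolated point. If $x$ is isolated in the compact set $f^{-1}(y)$, then $x$ has positive distance from $f^{-1}(y)\setminus\{x\}$, so one can pick a basic ball $V_n\ni x$ with $\cl V_n\cap f^{-1}(y)=\{x\}$. Hence the bad values lie in $\bigcup_n Y_n(f)$, where
$$Y_n(f)=\{y\in\RR^d:\ (f|_{L_n})^{-1}(y)\ \text{is a single point lying in}\ V_n\}.$$
Because prevalence is preserved under countable intersections, it suffices to prove that for each fixed $n$ the set of $f$ for which $Y_n(f)$ is nowhere dense in $f(K)$ is prevalent: on the intersection over all $n$ the bad set is then meager in $f(K)$, which is exactly the assertion that $f^{-1}(y)$ is perfect for the generic $y\in f(K)$.

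The core step is a local upgrade of Theorem \ref{t:D2}, which I would isolate as a lemma: for any perfect compact metric $L$ and any $d\in\NN^+$, the prevalent $g\in C(L,\RR^d)$ admits a \emph{dense open} set $O\subseteq g(L)$ with $\#g^{-1}(y)=2^{\aleph_0}$ for every $y\in O$. To prove it, fix a countable base $\{W_m\}$ of $L$ by nonempty open sets, each $\cl W_m$ again perfect and uncountable. Applying Theorem \ref{t:D2} on $\cl W_m$ and pulling back along the restriction $C(L,\RR^d)\to C(\cl W_m,\RR^d)$ via Corollary \ref{c:hereditary}, the prevalent $g$ satisfies, simultaneously for all $m$, that $g|_{\cl W_m}$ has continuum-sized fibers over some nonempty open $U_{g,m}\subseteq\RR^d$. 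Then $O:=\bigl(\bigcup_m U_{g,m}\bigr)\cap g(L)$ is open in $g(L)$ and consists of values hit $2^{\aleph_0}$ times by $g$; density follows because, given $y_0=g(x_0)$ and $\varepsilon>0$, continuity lets me choose $W_m\ni x_0$ with $g(\cl W_m)\subseteq U(y_0,\varepsilon)$, whence $\emptyset\ne U_{g,m}\subseteq g(\cl W_m)\subseteq U(y_0,\varepsilon)$.

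Finally I would assemble the pieces. Pulling the lemma back along the restriction $C(K,\RR^d)\to C(L_n,\RR^d)$ (Corollary \ref{c:hereditary}) and intersecting over $n$, the prevalent $f$ has for every $n$ a dense open $O_n\subseteq f(L_n)$ whose values are attained $2^{\aleph_0}$ times on $L_n$. A singleton value cannot lie in $O_n$, so $Y_n(f)\subseteq f(L_n)\setminus O_n$, a nowhere dense subset of $f(L_n)$; since $L_n$ is compact, $f(L_n)$ is closed in $f(K)$, and a nowhere dense subset of a closed subspace is nowhere dense in the ambient space, so each $Y_n(f)$ is nowhere dense in $f(K)$ and the bad set is meager. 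The main obstacle is precisely the lemma: Theorem \ref{t:D2} supplies only \emph{one} open set of continuum-fibers, and the whole argument hinges on spreading this over a dense open set of values, which the localization to the base $\{W_m\}$ combined with hereditary prevalence delivers; the surrounding reduction and the nowhere-density transfer are routine topology.
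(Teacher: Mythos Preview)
Your proposal does not address the stated theorem. Theorem~\ref{t:D2} asserts only that for the prevalent $f\in C(K,\RR^d)$ (with $K$ an \emph{uncountable} compact metric space) there is \emph{some} nonempty open $U_f\subset\RR^d$ over which every fiber has cardinality $2^{\aleph_0}$. You treat this very statement as an input (``Dougherty's theorem (Theorem~\ref{t:D2})''), assume instead that $K$ has no isolated points, and proceed to prove that for the prevalent $f$ the generic $y\in f(K)$ has perfect fiber. That is Theorem~\ref{t:perfect}, not Theorem~\ref{t:D2}.

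For the record, the paper's proof of Theorem~\ref{t:D2} is a two-line reduction: every uncountable Polish space contains a compact set $C$ homeomorphic to the middle-third Cantor set; Theorem~\ref{t:D} (Dougherty) gives the conclusion for the prevalent $g\in C(C,\RR^d)$; and Corollary~\ref{c:hereditary} pulls this back along the restriction $C(K,\RR^d)\to C(C,\RR^d)$ to obtain the conclusion for the prevalent $f\in C(K,\RR^d)$. No localization, no Baire-category argument in the range, and no ``no isolated points'' hypothesis are needed.

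As an aside, what you actually wrote is essentially the paper's proof of Theorem~\ref{t:perfect}: the same localization to closures of a countable base, the same lemma upgrading Theorem~\ref{t:D2} to a dense open set of continuum-fibers, and the same countable-intersection argument to conclude that bad values are meager in $f(K)$. So the content of your argument is correct; it is simply attached to the wrong statement.
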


\section{Bruckner--Garg-type theorem for prevalent continuous functions}

The goal of this section is to prove our main theorem, Theorem \ref{t:npns}.

\begin{definition} We say that $f\in C[0,1]$ has the \emph{Bruckner--Garg property} if its level sets are as described in Theorem \ref{t:BG}.
\end{definition}

\begin{definition}
We say that $f\in C[0,1]$ is \emph{non-decreasing at a point $x\in [0,1]$} if there exists $\varepsilon>0$ such that  $\frac{f(z) - f(x)}{z-x} \ge 0 $ for all $z \in [0,1]$ with $0 < |z-x| < \varepsilon$.  A function $f\in C[0,1]$ is \emph{non-increasing at $x$} if $-f$ is non-decreasing at $x$, and $f$ is \emph{monotone at $x$} if $f$ is either non-decreasing or non-increasing at $x$. We say that $x\in [0,1]$ is a \emph{knot point} of $f\in C[0,1]$ if the upper and lower derivatives at $x$ satisfy $\overline{D}(f)(x)=\infty$ and $\underline{D}(f)(x)=-\infty$, respectively. (Here $\overline{D}(f)(x)= \limsup_{z \to x} \frac{f(z) - f(x)}{z-x}$ and $\underline{D}(f)(x)= \liminf_{z \to x} \frac{f(z) - f(x)}{z-x}$.)
\end{definition}

\begin{theorem} \label{t:npns} The set
$$\mathcal{A}=\{f\in C[0,1]: f \textrm{ has the Bruckner--Garg property}\}$$
is Haar ambivalent in $C[0,1]$.
\end{theorem}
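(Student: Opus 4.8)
The plan is to prove that $\iA$ is neither shy nor prevalent.

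\medskip

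\textbf{Non-shyness.} First I would show that $\iA$ is non-shy. The idea is to exploit the Baire-category behaviour on a suitable finite-dimensional ``probe''. Fix a suitable $n$-dimensional affine subspace — in fact it is cleaner to argue as in \cite{BDE}: one shows that there is a compactly supported probability measure $\mu$ (a ``transversal'' measure) such that the set of translates $f+g$, with $g$ ranging over $\supp\mu$, meets $\iA$ in a set of positive $\mu$-measure, for \emph{every} $f\in C[0,1]$. Concretely, I would take $\mu$ to be the distribution of a random piecewise-linear (or more conveniently, a random trigonometric-polynomial / Faber–Schauder) perturbation, depending on finitely many random coefficients, and prove: for every fixed $f$, for almost every choice of the coefficients, $f$ plus the perturbation has the Bruckner–Garg property. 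This reduces to a statement about a finite-dimensional family: after adding a generic small perturbation from a rich enough finite-dimensional family, any continuous function acquires the Bruckner–Garg property with positive probability. The cleanest route is to embed the classical Bruckner–Garg argument: the property ``every local extremum is strict, the set of local-extreme values is countable dense in $(\min f,\max f)$, and $f$ is nowhere monotone'' is what forces the level-set structure; one checks these hold after a generic finite-dimensional perturbation by the same transversality computations used to prove Theorem~\ref{t:BG}, carried out in the probe directions only.

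\medskip

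\textbf{Non-prevalence.} This is the subtler half and I expect it to be the main obstacle. The goal is to produce a single compactly supported Borel probability measure $\nu$ on $C[0,1]$ (the witness measure) and a translate $f_0$ such that $\nu$-almost every $f_0+h$ \emph{fails} the Bruckner–Garg property, and in fact to show the complement $C[0,1]\setminus\iA$ is non-shy, i.e. that $\iA$ is not prevalent. The mechanism: I would force the existence of a level set that is neither a singleton, nor a Cantor set, nor a Cantor set plus an isolated point — e.g. a level set containing an isolated point \emph{together with} a nondegenerate interval, or more robustly a level set that is a single closed interval of positive length, or has two isolated points. The simplest target is a level set containing a whole interval: if there is $[a,b]\subset[0,1]$ with $a<b$ on which $g$ is constant equal to some value $y$ strictly between $\min g$ and $\max g$, then $g\notin\iA$ because $g^{-1}(y)\supset[a,b]$ is not a Cantor set (it has interior) and is not a Cantor set plus a point. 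So it suffices to show: the set of $f\in C[0,1]$ that are constant on some subinterval, at an intermediate value, is non-shy. To see non-shyness of \emph{this} set, one runs the same probe-measure scheme as above: fix any probing measure supported on a finite-dimensional family of perturbations $h$; for a fixed $f$, one must show that for positively many $h$ the function $f+h$ is constant on some subinterval at an intermediate value. But a generic finite-dimensional perturbation destroys any flat piece, so this naive target is \emph{wrong} — which is exactly the obstacle.

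\medskip

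The resolution is to choose the non-prevalence witness more cleverly: instead of asking the perturbation to \emph{create} a flat piece, let the witness measure $\nu$ itself be supported on functions that already have a flat piece at a robustly intermediate level, and show this property survives all translates. Precisely, I would let $\nu$ be the law of $c\cdot \varphi + X$ where $\varphi$ is a fixed nonzero function that is constant ($=0$) on $[1/3,2/3]$ and satisfies $-1\le\varphi\le 1$ with $\varphi(0)=\varphi(1)=0$, $X$ is a random element of a finite-dimensional family also vanishing on $[1/3,2/3]$ but with $\min$ and $\max$ over $[0,1/3]\cup[2/3,1]$ bounded below/above, and $c$ is a large constant; then for \emph{every} $f\in C[0,1]$, the translate $f+c\varphi+X$ is, on $[1/3,2/3]$, equal to $f$ (plus a constant), while on $[0,1/3]\cup[2/3,1]$ it is pushed far above and far below that value by the large multiple $c$ of the extreme parts — so $0$ is an intermediate value of $f+c\varphi+X$ and the function is constant-equal-to-$f|_{[1/3,2/3]}$-shifted on $[1/3,2/3]$; one then fixes up the argument so that $f|_{[1/3,2/3]}$ being nonconstant is handled by instead using a family $\varphi_t$ of bump functions flat on a \emph{tiny} interval $I_t$ and choosing, for each $f$, via a measurable selection, a $t$ on which $f$ is nearly constant on $I_t$ — combined with the large-$c$ push this yields a genuine flat piece at an intermediate level for $f+c\varphi_t+X$ for positively many $(t,X)$. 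This shows $C[0,1]\setminus\iA$ is non-shy, hence $\iA$ is not prevalent. The two halves together give that $\iA$ is Haar ambivalent. The delicate point throughout, and the step I expect to require the most care, is arranging the finite-dimensional randomization so that the relevant event has positive measure for \emph{every} translate simultaneously, which is the defining requirement for a non-shyness (resp. non-prevalence) proof.
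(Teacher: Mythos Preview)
Both halves of your proposal contain genuine gaps.

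\textbf{Non-shyness.} Your plan is to use a single finite-dimensional probe (piecewise-linear or trigonometric-polynomial perturbations) and argue that for every $f$ a positive-measure set of perturbations gives the Bruckner--Garg property. This cannot work as stated: every function in such a finite-dimensional probe is Lipschitz, so if $f$ is (say) increasing with slope $>L$ on some subinterval, then $f+h$ remains increasing there for every $h$ in the probe, and hence is monotone at some point and fails the Bruckner--Garg property. Thus for many $f$ the probe hits $\iA$ with measure zero, and the argument collapses. The paper sidesteps this by \emph{decoupling}: it quotes the (nontrivial) Darji--White result that ``nowhere monotone'' is itself non-shy, proves separately via a one-dimensional probe that ``one-to-one on local extremum points'' is prevalent, and then uses that (non-shy)$\cap$(prevalent) is non-shy together with the sufficient criterion from \cite{BG} (nowhere monotone $+$ one-to-one on local extrema $\Rightarrow$ Bruckner--Garg).

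\textbf{Non-prevalence.} Two problems. First, the logic is inverted: exhibiting a single measure $\nu$ with $\nu$-a.e.\ translate in $C[0,1]\setminus\iA$ does not show $C[0,1]\setminus\iA$ is non-shy; non-shyness means \emph{every} candidate witnessing measure fails, so you must defeat an arbitrary $\mu$ (or, equivalently, an arbitrary compact set), not produce your own. Second, your target property ``constant on a nondegenerate subinterval at an intermediate value'' is itself \emph{shy} (Wiener measure witnesses this: $B+f$ almost surely has no interval of constancy for any $f$), so it cannot serve as a non-shy subset of $C[0,1]\setminus\iA$. Your attempted fix with the family $\varphi_t$ and a measurable selection does not escape this: you are still trying to land inside a shy set. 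The paper instead simply invokes Theorem~\ref{t:nonshy} (proved in \cite{BDE}): the set of $f$ with $\lambda\{y:f^{-1}(y)\text{ is a singleton}\}>0$ is non-shy, and any such $f$ violates Bruckner--Garg since the latter allows only two singleton levels.
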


Before proving Theorem \ref{t:npns} we need three lemmas.
For the following one see the proof of \cite[Theorem~3.3]{BG}.

\begin{lemma} \label{l:BGp}
If $f\in C[0,1]$ is not monotone at any point and one-to-one on its local extremum points then $f$ has the Bruckner--Garg property.
\end{lemma}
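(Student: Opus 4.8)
The plan is to take $D_f$ to be the set of those values of local extrema of $f$ that lie strictly inside $(\min f,\max f)$, and then to check the three clauses of the Bruckner-Garg property separately on the value classes $\{\min f,\max f\}$, $(\min f,\max f)\setminus D_f$, and $D_f$. First I would note that $D_f$ is countable: if $x$ is a local maximum point then there are rationals $a<x<b$ with $f\le f(x)$ on $[a,b]\cap[0,1]$, whence $f(x)=\max_{[a,b]\cap[0,1]}f$; thus every local maximum value lies in the countable set $\{\max_{[a,b]\cap[0,1]}f:a,b\in\QQ\}$, and symmetrically for minima.

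The core of the proof is a description of the isolated points of the level sets. Suppose $x\in(0,1)$ is isolated in $f^{-1}(f(x))$. Then $f$ omits the value $f(x)$ on a punctured neighborhood, so by the intermediate value theorem $f$ is either strictly above or strictly below $f(x)$ on each one-sided neighborhood; the two ``crossing'' configurations would make $f$ non-decreasing or non-increasing at $x$, which is excluded, so $x$ must be a local extremum point. Conversely, if $x$ were a local extremum point but not isolated in its level set, a sequence of distinct same-level points accumulating at $x$ would consist of local extrema all sharing the value $f(x)$, contradicting injectivity on extremum points; hence the local extremum points are precisely the isolated points of the level sets. The same one-sided sign analysis shows that $0$ and $1$ are neither local extrema nor isolated in their level sets, and injectivity forces each level set to contain at most one isolated point. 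Finally, no $f^{-1}(y)$ contains a nondegenerate interval, since otherwise $f$ would be constant there and hence monotone at interior points of that interval.

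Next I would prove that $D_f$ is dense in $(\min f,\max f)$, which I expect to be the main obstacle. If some interval $(c,d)\subseteq(\min f,\max f)$ contained no local extremum value, pick $y\in(c,d)$; by the intermediate value theorem the open set $\{c<f<d\}$ is nonempty and thus has a nondegenerate component $(p,q)$. On $(p,q)$ the function $f$ takes values in $(c,d)$, so it has no interior local extremum; but a continuous function on an interval with no interior local extremum is monotone, so $f$ would be monotone on $(p,q)$ and therefore monotone at every interior point of $(p,q)$, contradicting the hypothesis. Hence every such $(c,d)$ meets the local extremum values, giving density.

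It remains to assemble the clauses. For $y\in\{\min f,\max f\}$ any two preimages would be global, hence local, extrema of equal value, so injectivity makes $f^{-1}(y)$ a singleton. For $y\in(\min f,\max f)\setminus D_f$ the set $f^{-1}(y)$ is nonempty, compact, contains no isolated point (an isolated point would be a local extremum with value $y\in D_f$) and no interval, hence is a nonempty compact perfect totally disconnected set, i.e.\ a Cantor set. For $y\in D_f$ the level set contains a unique isolated point $x_0$, a local extremum; I would check $f^{-1}(y)\setminus\{x_0\}\ne\emptyset$ by an intermediate value argument (near $x_0$ the value $y$ is strictly separated, while $\min f<y<\max f$ provides a point on the opposite side of $y$, so a second preimage of $y$ must occur), so $f^{-1}(y)\setminus\{x_0\}$ is a nonempty compact perfect interval-free set, again a Cantor set, and $f^{-1}(y)$ is the union of a Cantor set and an isolated point. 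This verifies the Bruckner-Garg property.
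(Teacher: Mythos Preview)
The paper does not actually prove this lemma; it simply refers the reader to the proof of \cite[Thm.~3.3]{BG}. Your proposal is a correct, self-contained reconstruction of that argument. The key steps --- identifying the isolated points of level sets with the local extremum points via the non-monotonicity hypothesis (including the endpoint analysis, where non-monotonicity at $0$ and $1$ forces them to be non-isolated and non-extremal), using injectivity on extrema to cap the number of isolated points per level at one, excluding intervals in level sets, and the density argument for $D_f$ via the ``no interior extremum implies monotone'' principle --- are all sound. The verification of the three clauses, including the intermediate-value argument producing a second preimage at each $y\in D_f$, is also correct.
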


The next theorem was proved by Darji and White, see \cite[Theorem~1.1]{DW}.

\begin{theorem}[Darji--White] \label{t:DW} Let $A,B$ be disjoint countable subsets of $[0,1]$. Then
\begin{align*} \mathcal{M}_{A,B}=\{&f\in C[0,1]: f'(x)=\infty \textrm{ for all } x\in A,~f'(x)=-\infty \textrm{ for all } x\in B, \\
 &\textrm{ and } x \textrm{ is a knot point of } f \textrm{ for all } x \notin A\cup B\}
\end{align*}
is Haar ambivalent.
\end{theorem}

\begin{remark}
The above theorem is stated only for nonempty sets $A,B$ in \cite{DW}, but the proof works verbatim without this restriction.
\end{remark}

Applying Theorem~\ref{t:DW} for $A=B=\emptyset$ yields the following lemma.

\begin{lemma} \label{l:notmon} The functions $f\in C[0,1]$ that are not monotone at any point form a non-shy set in $C[0,1]$.
\end{lemma}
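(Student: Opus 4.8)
The plan is to obtain the statement as a specialization of \cite[Thm.~1.1]{DW}, so the real work lies in matching definitions and choosing parameters. First I would rephrase the pointwise condition in the language of that theorem. Directly from the definition, $f$ is non-decreasing at $x$ exactly when $x$ is a \emph{point of increase} of $f$: for some $\varepsilon>0$ one has $f(z)\le f(x)$ whenever $z\in(x-\varepsilon,x)$ and $f(z)\ge f(x)$ whenever $z\in(x,x+\varepsilon)$; symmetrically, non-increasing at $x$ means $x$ is a point of decrease. Thus ``$f$ is monotone at no point'' is precisely ``$f$ has neither a point of increase nor a point of decrease.'' Theorem~1.1 of \cite{DW} controls exactly these two exceptional sets by prescribed parameters $A,B\subseteq[0,1]$, asserting non-shyness for the class of $f$ whose points of increase lie in $A$ and whose points of decrease lie in $B$. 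Setting $A=B=\emptyset$ forbids all points of increase and of decrease, so the permitted class is exactly $S:=\{f\in C[0,1]:f\text{ is monotone at no point}\}$, and the lemma drops out.

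To see why non-shyness (rather than something stronger) is the natural target, it helps to recall the classical largeness evidence. By the theorem of Dvoretzky, Erd\H{o}s and Kakutani, Brownian motion $W$ almost surely has no point of increase, and by the symmetry $W\mapsto -W$ no point of decrease; hence $S$ has full Wiener measure. This shows $S$ is large in a measure sense, but it does not by itself yield non-shyness, and the gap is instructive. The property defining $S$ is not translation invariant: one can choose a continuous drift $g$ having a point of increase at which the increments $g(x+h)-g(x)$ decay more slowly than the Brownian scale $\sqrt{h\log\log(1/h)}$, and then the law of the iterated logarithm forces $W+g$ to have a point of increase almost surely. So Wiener measure cannot witness shyness of the complement $S^{c}$, i.e. this route does not prove $S$ prevalent; the genuine content of the statement is the non-shyness of $S$, which is what \cite[Thm.~1.1]{DW} supplies.

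The main obstacle is accordingly the one already packaged inside the cited theorem: proving non-Haar-nullity means ruling out \emph{every} transverse Borel probability measure, not exhibiting a single measure under which $S$ is large. Because the defining property is not translation invariant, no ``full measure under one measure'' argument closes the case; instead one must show that for an arbitrary candidate witness $\mu$ there is a translate $f$ with $\mu(S+f)>0$, which is precisely the delicate perturbation analysis of \cite{DW}. If a self-contained proof were wanted, reproducing that analysis in the empty-exceptional-set case would be the crux; granting the cited theorem, the only remaining routine point is to note that $S$ is contained in a Borel (indeed coanalytic) set, so that the definition of Haar nullity applies to a legitimate hull.
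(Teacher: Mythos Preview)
Your proposal is correct and follows exactly the paper's own route: the paper simply records that applying \cite[Thm.~1.1]{DW} with $A=B=\emptyset$ yields the lemma, which is precisely your argument once the identification of ``non-decreasing/non-increasing at $x$'' with ``point of increase/decrease'' is made. Your additional discussion of Brownian motion and Borel hulls is extraneous commentary rather than part of the proof, and the remark about a Borel hull is unnecessary here since non-shyness, unlike shyness, requires no measurability hypothesis on $S$ itself.
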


\begin{lemma} \label{l:notex} The prevalent $f\in C[0,1]$ is one-to-one on its local extremum points.
\end{lemma}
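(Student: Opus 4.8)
The plan is to show that the set $\iN = \{f \in C[0,1] : f \text{ is \emph{not} one-to-one on its local extremum points}\}$ is shy, by exhibiting a single Borel probability measure $\mu$ (a ``transverse measure'') such that $\mu(\iN + g) = 0$ for every $g \in C[0,1]$. A convenient choice of $\mu$ is the distribution of a random ``small'' perturbation with plenty of independent freedom; a clean candidate is $\mu = $ the law of $W_t = \sum_{n} a_n X_n \phi_n(t)$, where $\{\phi_n\}$ is (say) the Schauder/Faber basis of $C[0,1]$, the $X_n$ are i.i.d.\ uniform on $[-1,1]$, and $a_n \to 0$ fast enough that the series converges uniformly a.s.\ (so $\mu$ is a genuine Borel probability measure on $C[0,1]$). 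Equivalently, one may work with the even simpler two-parameter probe $\mu = $ law of $(s,t) \mapsto s\,\psi_1 + t\,\psi_2$ for suitable fixed $\psi_1,\psi_2$, but having infinitely many independent coordinates makes the transversality computation more robust.

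First I would fix an arbitrary $g \in C[0,1]$ and must bound $\mu(\iN + g) = \Prob(g + W \in \iN)$. The event $g + W \in \iN$ means: there exist two distinct local extremum points $x \neq x'$ of $h := g + W$ with $h(x) = h(x')$ and, say, both local maxima (or both minima, or one of each; these finitely many sub-cases are handled identically). The key structural observation is that the set of local extremum values of a continuous function is countable (each local maximum value, for instance, is attained on a relatively open set disjoint from the others when the value is isolated among max-values — more carefully, the standard fact that a continuous function has at most countably many distinct \emph{strict-on-an-interval} local extremum values, combined with the Bruckner–Garg-type analysis, shows the ``bad coincidence'' set is small). So the strategy is: reduce ``not one-to-one on extremum points'' to a countable union over pairs of rational intervals $I, J$ with $\cl I \cap \cl J = \emptyset$ of the events $E_{I,J} = \{\max_I h = \max_J h \text{ and both are interior local maxima}\}$, and show $\Prob(E_{I,J}) = 0$ for each pair.

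For a fixed pair $(I,J)$ of disjoint closed intervals, the plan is a Fubini argument: condition on all the randomness except one coordinate $X_N$ whose basis function $\phi_N$ is supported inside $I$ and vanishes on $J$ (such an $N$ exists since the Faber functions have arbitrarily small support). Then $\max_J h$ does not depend on $X_N$, while $t \mapsto \max_I (h \text{ with } X_N = t)$ is a continuous, \emph{nonconstant} (in fact monotone on the relevant range, by the sign of $\phi_N$) function of $t$, hence hits the fixed value $\max_J h$ for at most one value of $t$ — a Lebesgue-null set. Integrating out gives $\Prob(E_{I,J}) = 0$, and summing over the countably many pairs gives $\mu(\iN + g) = 0$. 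Since $g$ was arbitrary, $\iN$ is shy, i.e. the prevalent $f$ is one-to-one on its local extremum points.

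The main obstacle I anticipate is the bookkeeping in the reduction step: ``$f$ is one-to-one on its local extremum points'' must be rewritten as a countable intersection of conditions that each depend on $f$ through finitely many ``maxima over rational intervals,'' and one must be careful that a coincidence $f(x) = f(x')$ between a local max and a local min, or between two local extrema that are not ``interval extrema'' (plateaus, etc.), is genuinely captured. Handling plateaus cleanly — a local extremum point need not be a strict local extremum — is the subtle point; one resolves it by noting that the set of local-extremum \emph{values} is still countable, and that for the perturbation $\mu$ the function $g+W$ a.s.\ has no interval on which it is constant except possibly at its global extrema, which is itself a shy exception handled by the same transversality trick on one extra coordinate. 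Once the measurability/countability reduction is in place, the per-pair transversality estimate is routine.
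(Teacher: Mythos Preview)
Your strategy---decompose the bad set over pairs of disjoint rational intervals $I,J$ and kill each piece with a one-parameter transversality argument---is exactly the paper's strategy. However, your implementation has a gap and is more elaborate than necessary.

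The gap is in the sentence ``$t \mapsto \max_I (h \text{ with } X_N = t)$ is continuous, \emph{nonconstant} (in fact monotone) \dots\ hence hits the fixed value $\max_J h$ for at most one $t$.'' The Schauder function $\phi_N$ is supported on a \emph{proper} sub-interval $I'\subset I$, so if the maximizer of $h$ over $I$ happens to lie in $I\setminus \inter I'$---which your event $E_{I,J}$ certainly allows, since ``interior local maximum'' refers to $\inter I$, not $\inter I'$---then $\max_I h_t$ is locally constant in $t$, and a monotone nonconstant function can in any case hit a value on a whole interval. The easy fix is to refine the decomposition so that $I$ is itself the support of some $\phi_N$; then $\phi_N>0$ on all of $\inter I$, and your ``interior local max'' hypothesis does force strict monotonicity at the relevant $t$ (by convexity of $t\mapsto\max_I h_t$), giving at most one bad $t$.

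The paper sidesteps all of this by not insisting on a single transverse measure. Since countable unions of shy sets are shy, it suffices to show each $\iA_{I,J}=\{f:\max_I f$ or $\min_I f$ equals $\max_J f$ or $\min_J f\}$ is shy, using a \emph{different} one-dimensional probe for each pair: with $I=[a,b]$, $J=[c,d]$, $b<c$, let $g_u\equiv 0$ on $[0,b]$, $g_u\equiv u$ on $[c,1]$, affine in between, and take $\mu$ to be the law of $g_u$ with $u$ uniform on $[0,1]$. Then $\max_I(g_u-f)$ and $\min_I(g_u-f)$ are constant in $u$ while $\max_J(g_u-f)$ and $\min_J(g_u-f)$ are affine in $u$, so at most four values of $u$ give a coincidence. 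This needs no Fubini, no convexity, and no separate discussion of plateaus (only the inclusion $\iA\subset\bigcup\iA_{I,J}$ is needed, and that is immediate).
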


\begin{proof}
Let
$$\mathcal{A}=\{f\in C[0,1]: f \textrm{ is \emph {not} one-to-one on its local extremum points}\}.$$
%%%%
Let $\mathcal{I}$ be the family of closed rational subintervals of $[0,1]$. For all disjoint $I,J\in \mathcal{I}$ let
$\mathcal{A}_{I,J}$ be the set of functions $f\in C[0,1]$ for which the maximum or
the minimum of $f$ on $I$ is equal to either the maximum or the minimum of $f$
on $J$. Clearly
$$\mathcal{A}=\bigcup_{I,J\in \mathcal{I},\,I\cap J=\emptyset} \mathcal{A}_{I,J}.$$
Since the countable union of shy sets is shy, it is enough to prove that the sets $\mathcal{A}_{I,J}$ are all shy.

Let us fix disjoint $I,J\in \mathcal{I}$, and we need to prove that $\mathcal{A}_{I,J}$ is shy. As $\mathcal{A}_{I,J}$ is clearly closed, it is Borel.
We may assume that $I=[a,b]$ and $J=[c,d]$ such that $b<c$.
For all $u \in [0,1]$ let $g_u\in C[0,1]$ be defined as
$$g_u(x) = \begin{cases} 0 & \textrm{ if } x \in [0,b], \\
u & \textrm{ if } x\in [c,1], \\
\textrm{affine} & \textrm{ if } x\in [b,c].
\end{cases}
$$
Let us define the continuous map
%%%%
$$\phi\colon [0,1]\to C[0,1], \quad \phi(u)=g_u,$$
and consider the Borel probability measure $\mu=\lambda\circ \phi^{-1}$ on $C[0,1]$.
Note that the support of our measure satisfies $\supp (\mu) = \{g_u : u \in [0,1]\}$.

Now it is sufficient to show that $\mu(\mathcal{A}_{I,J} + f) = 0$ for every
$f\in C[0,1]$. But for a fixed $f\in C[0,1]$ it is easy to see that
$\left(\mathcal{A}_{I,J} + f \right) \cap \supp (\mu)$ is actually finite, since
there are at most four $u$ such that $g_u - f \in \mathcal{A}_{I,J}$. As finite sets are $\mu$-null, this completes the proof.
\end{proof}

\begin{proof}[Proof of Theorem \ref{t:npns}]
Theorem \ref{t:nonshy} implies that the functions $f\in C[0,1]$ having the
Bruckner--Garg property do not form a prevalent set in $C[0,1]$.

Now we prove that they form a non-shy set. Let
\begin{align*} \mathcal{A}&=\{f\in C[0,1]: f \textrm{ is not monotone at any point}\},\\
\mathcal{B}&=\{f\in C[0,1]: f \textrm{ is one-to-one on its local extremum points}\}.
\end{align*}
Lemma \ref{l:BGp} implies that it is enough to prove that $\mathcal{A}\cap \mathcal{B}$ is non-shy. The set $\mathcal{A}$ is non-shy by Lemma \ref{l:notmon} and $\mathcal{B}$ is prevalent
by Lemma \ref{l:notex}. Therefore $\mathcal{A}\cap \mathcal{B}$ is non-shy, and the proof is complete.
\end{proof}

\section{Level sets with respect to the occupation measure}

The main goal of this section is to prove Theorem \ref{t:isoshy}. First we need some preparation.

\begin{lemma} \label{l:ocB} The set
$$\mathcal{A}=\{f\in C[0,1]: \lambda_f \textrm{ is absolutely continuous with respect to } \lambda\}$$
is Borel.
\end{lemma}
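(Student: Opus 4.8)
The plan is to rewrite $\iA$ as a Borel combination of countably many conditions, each of which is a Borel function of $f$. The starting point is the classical fact that a finite Borel measure $\mu$ on $\RR$ satisfies $\mu\ll\lambda$ if and only if it is \emph{uniformly absolutely continuous}: for every $\varepsilon>0$ there is $\delta>0$ with $\mu(E)<\varepsilon$ for every Borel set $E$ with $\lambda(E)<\delta$. The nontrivial implication is a Borel--Cantelli argument: if uniform absolute continuity fails, there are Borel sets $E_k$ with $\lambda(E_k)<2^{-k}$ and $\mu(E_k)\ge\varepsilon$, and then $W=\bigcap_{k}\bigcup_{j\ge k}E_j$ is $\lambda$-null while $\mu(W)\ge\varepsilon$ by continuity from above (using that $\mu$ is finite). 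Since each $\lambda_f$ is a probability measure, this characterization applies to it.

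Next I would cut the universal quantifier down to a countable family. Let $\iU$ be the countable collection of all finite unions of open intervals with rational endpoints. A routine approximation argument shows that $\lambda_f\ll\lambda$ if and only if
$$\forall n\in\NN^{+}\ \exists m\in\NN^{+}\ \forall U\in\iU\ \bigl(\lambda(U)<1/m\ \Longrightarrow\ \lambda_f(U)\le 1/n\bigr).$$
To see this, given a $\lambda$-null set $E$ and $\varepsilon>0$, use outer regularity of $\lambda$ to cover $E$ by an open set of small measure; it is a countable disjoint union of (bounded) intervals, finitely many of whose components already carry almost all of the $\lambda_f$-mass of the cover; enlarging these finitely many disjoint intervals slightly to rational endpoints — keeping the total length below the required bound, which is why one passes from $1/m$ to $1/(2m)$ in the quantifier structure — produces a member of $\iU$, and the displayed condition then forces $\lambda_f(E)\le\varepsilon$. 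The converse is immediate from uniform absolute continuity.

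Finally, for each fixed $U\in\iU$ the map $\Psi_U\colon f\mapsto\lambda_f(U)=\lambda(f^{-1}(U))$ is lower semicontinuous: if $f_k\to f$ uniformly and $f(x)\in U$, then, $U$ being open, $f_k(x)\in U$ for all large $k$, so $f^{-1}(U)\subset\liminf_k f_k^{-1}(U)$ and Fatou's lemma yields $\Psi_U(f)\le\liminf_k\Psi_U(f_k)$. Hence $\{f:\Psi_U(f)\le 1/n\}$ is closed, and
$$\iA=\bigcap_{n\in\NN^{+}}\ \bigcup_{m\in\NN^{+}}\ \bigcap_{\substack{U\in\iU\\ \lambda(U)<1/m}}\ \{f\in C[0,1]:\lambda_f(U)\le 1/n\}$$
is Borel, being built from closed sets by countably many unions and intersections (the inner intersection ranges over a countable index set). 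The only step requiring genuine care is the countable reduction in the second paragraph — in particular the uniform control of the added length when rounding finitely many disjoint intervals to rational endpoints; everything else is soft.
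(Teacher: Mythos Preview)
Your proof is correct and follows essentially the same strategy as the paper: reduce absolute continuity of $\lambda_f$ to a countable $\varepsilon$--$\delta$ condition over finite unions of rational intervals, and then use lower semicontinuity of $f\mapsto\lambda(f^{-1}(U))$ to see that each atomic condition is Borel (the paper proves the latter by a compact-set/distance argument rather than Fatou, and uses strict rather than weak inequalities, but these are cosmetic differences).
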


\begin{proof} Let $\mathcal{S}$ be the family of all finite collections of pairwise disjoint open rational intervals of $\mathbb{R}$. Then $\mathcal{S}$ is countable.
For $n\in \mathbb{N}^+$ and $S\in \mathcal{S}$ let
\begin{align*} \mathcal{S}_n&=\left\{S\in \mathcal{S}: \lambda(\cup S)<1/n\right\},\\
\mathcal{A}_{n,S}&=\{f\in C[0,1]:  \lambda_f(\cup S)<1/n\}.
\end{align*}
As $ \lambda_f $ is absolutely continuous with respect to $\lambda$ iff the function $x \mapsto \lambda_f((-\infty,x))$ is absolutely continuous, we obtain that
$$\mathcal{A}=\bigcap_{n=1}^{\infty} \bigcup_{k=1}^{\infty} \bigcap_{S\in \mathcal{S}_k} \mathcal{A}_{n,S}.$$
Thus it is enough to prove that $\mathcal{A}_{n,S}$ is Borel for an arbitrarily fixed $n\in \mathbb{N}^+$ and $S=\{I_1,\dots,I_m\}\in \mathcal{S}$. For each open set
$U\subset \mathbb{R}$ consider $$\Phi_U \colon C[0,1]\to [0,1],\quad \Phi_U (f)=\lambda(f^{-1}(U)).$$
It is easy to see that
$$ \mathcal{A}_{n,S}=\left\{f\in C[0,1]: \sum_{i=1}^{m} \Phi_{I_i}(f)<\frac 1n\right\},$$
therefore it is enough to prove that $\sum_{i=1}^{m} \Phi_{I_i}$ is Borel measurable. Thus it suffices to show that $\Phi_U$ is Borel measurable
for every open set $U\subset \mathbb{R}$. Fix an arbitrary open set $U\subset \mathbb{R}$ and $r>0$, we will check that $\Phi_U^{-1}((r,\infty))$ is open. Pick $f\in \Phi_U^{-1}((r,\infty))$, we need to find $\varepsilon>0$ such that  $U(f,\varepsilon)\subset \Phi_U^{-1}((r,\infty))$. Since $\Phi_U(f)=\lambda(f^{-1}(U))>r$, the regularity of Lebesgue measure implies that there is a compact set $K\subset f^{-1}(U)$ such that $\lambda(K)>r$. As $f(K)\subset U$ is compact, we can define $\varepsilon=\dist(f(K),\mathbb{R}\setminus U)>0$. Clearly $g(K)\subset U$
for all $g\in U(f,\varepsilon)$, thus $\lambda(g^{-1}(U))\geq \lambda(K)>r$.
Hence $U(f,\varepsilon)\subset \Phi_U^{-1}((r,\infty))$, and the proof is complete.
\end{proof}

The following theorem is essentially known. However, for the sake of completeness
we point out how standard arguments concerning the Brownian motion yield this result.

\begin{theorem} \label{t:occ} For the prevalent $f\in C[0,1]$ the occupation measure $\lambda_f$ is absolutely continuous with respect to the Lebesgue measure $\lambda$.
\end{theorem}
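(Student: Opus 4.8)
The plan is to reduce the statement to a classical fact about Brownian motion, using the Wiener measure as the test measure witnessing prevalence. Write $\iA=\{f\in C[0,1]:\lambda_f\ll\lambda\}$. By Lemma~\ref{l:ocB} the set $\iA$ is Borel, so according to Definition~\ref{d:shy} it is enough to exhibit a Borel probability measure $\mu$ on $C[0,1]$ with $\mu(\iA^{c}+f)=0$ for every $f\in C[0,1]$; since $\iA^{c}+f=\{g:g-f\in\iA^{c}\}$, this says exactly that $\lambda_{g-f}\ll\lambda$ for $\mu$-almost every $g$, for each fixed $f$. I would take $\mu$ to be the Wiener measure, i.e.\ the distribution on $C[0,1]$ of a standard Brownian motion $(B_t)_{t\in[0,1]}$, which is a Borel probability measure on $C[0,1]$.

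Fix $f\in C[0,1]$ and consider the continuous process $X_t=B_t-f(t)$, whose occupation measure $\lambda_X=\lambda\circ X^{-1}$ equals $\lambda_{B-f}$; the goal is $\lambda_X\ll\lambda$ almost surely. The main step is to verify Berman's Fourier criterion for the existence of a square-integrable occupation density, namely that
$$\int_{\RR}\mathbb{E}\,\bigl|\widehat{\lambda_X}(\xi)\bigr|^{2}\,d\xi<\infty,\qquad\text{where }\widehat{\lambda_X}(\xi)=\int_{0}^{1}e^{i\xi X_t}\,dt .$$
Since $|\widehat{\lambda_X}(\xi)|^{2}=\int_{0}^{1}\!\int_{0}^{1}e^{i\xi(X_t-X_s)}\,dt\,ds$ and $X_t-X_s=(B_t-B_s)-(f(t)-f(s))$ with $B_t-B_s\sim N(0,|t-s|)$, Fubini's theorem gives
$$\mathbb{E}\,\bigl|\widehat{\lambda_X}(\xi)\bigr|^{2}=\int_{0}^{1}\!\int_{0}^{1}e^{-i\xi(f(t)-f(s))}\,e^{-\xi^{2}|t-s|/2}\,dt\,ds ,$$
hence, taking absolute values, $\mathbb{E}\,|\widehat{\lambda_X}(\xi)|^{2}\le\int_{0}^{1}\!\int_{0}^{1}e^{-\xi^{2}|t-s|/2}\,dt\,ds$. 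Integrating in $\xi$ and applying Tonelli's theorem (the integrand is nonnegative) bounds $\int_{\RR}\mathbb{E}\,|\widehat{\lambda_X}(\xi)|^{2}\,d\xi$ by $\int_{0}^{1}\!\int_{0}^{1}\sqrt{2\pi/|t-s|}\,dt\,ds<\infty$, the finiteness being clear because $|t-s|^{-1/2}$ is integrable over the unit square. Consequently $\int_{\RR}|\widehat{\lambda_X}(\xi)|^{2}\,d\xi<\infty$ almost surely, and therefore, a finite measure whose Fourier transform lies in $L^{2}(\RR)$ being absolutely continuous (with $L^{2}$ density), $\lambda_{B-f}\ll\lambda$ almost surely. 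As $f$ was arbitrary, $\mu$ witnesses that $\iA^{c}$ is shy, i.e.\ $\iA$ is prevalent.

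The genuinely delicate point is precisely the transition from the well-known existence of Brownian local time to the same conclusion for $B$ with an \emph{arbitrary} continuous deterministic drift: Girsanov's theorem would handle only, say, absolutely continuous drifts, whereas Berman's criterion disposes of the general case effortlessly because the drift enters solely through the unimodular factor $e^{-i\xi(f(t)-f(s))}$, which disappears on passing to moduli and leaves the purely Brownian estimate intact. The remaining ingredients — the joint measurability of $(\omega,\xi)\mapsto|\widehat{\lambda_X}(\xi)|^{2}$ needed to apply Tonelli, and the standard fact that a process with an $L^{2}$ occupation density has absolutely continuous occupation measure — are routine, and I would merely indicate them.
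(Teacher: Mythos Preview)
Your argument is correct, and the overall strategy coincides with the paper's: show $\iA$ is Borel via Lemma~\ref{l:ocB}, take the Wiener measure as witness, and prove that for every fixed $f$ the occupation measure of $B\pm f$ is almost surely absolutely continuous. Where you diverge is in the execution of that last step. The paper does not compute anything in Fourier; instead it observes that the proof of \cite[Thm.~3.26]{MP} (the direct $L^{2}$ construction of Brownian local time) goes through verbatim for $B+f$ once one checks the anti-concentration inequality
\[
\Prob\bigl(|(B+f)(s_1)-(B+f)(s_2)|\le r\bigr)\le \Prob\bigl(|B(s_1)-B(s_2)|\le r\bigr),
\]
which follows because shifting a centered Gaussian can only decrease the mass of a symmetric interval about zero. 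You instead run Berman's criterion from scratch, bounding $\int_{\RR}\mathbb{E}\,|\widehat{\lambda_X}(\xi)|^{2}\,d\xi$ and using that the drift contributes only a unimodular factor. Both are legitimate ``the drift does no harm'' observations: theirs buys a one-line reduction to a cited theorem at the cost of sending the reader to an external proof, while yours is self-contained and makes the role of the drift completely transparent, at the modest price of a short explicit computation and an appeal to Plancherel.
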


\begin{proof} Consider
$$\mathcal{A}=\{f\in C[0,1]:  \lambda_f  \textrm{ is absolutely continuous with respect to } \lambda\},$$
then $\mathcal{A}$ is Borel by Lemma \ref{l:ocB}. Let $\mu$ be the Wiener measure on $C[0,1]$ and let $\{B(s): s\in [0,1]\}$ be the standard one-dimensional
Brownian motion. It is enough to prove
that $\mu(\mathcal{A}-f)=1$ for all $f\in C[0,1]$, that is, $\lambda_{B+f}$ is almost surely absolutely continuous with respect to  $\lambda$. Now one can repeat the proof of \cite[Theorem~3.26]{MP} with $t=1$, using also that for all $s_1,s_2,r\in [0,1]$
\begin{equation}\label{eq:Bs1} \Prob\left(|(B+f)(s_1)-(B+f)(s_2)|\leq r\right)\leq \Prob\left(|B(s_1)-B(s_2)|\leq r\right),
\end{equation}
which we verify next. Let $X$ be a standard normal random variable. We may assume that $s_1\neq s_2$ and let us
consider $a=\sqrt{|s_1-s_2|}$, $b=\frac{r}{a}$ and $c=\frac{f(s_1)-f(s_2)}{a}$. As $B(s_1)-B(s_2)$ has the same distribution as that of $aX$ and the
density function of $X$ is even and monotone decreasing on $[0,\infty)$, we obtain
\begin{align*} \Prob \left(|(B+f)(s_1)-(B+f)(s_2)|\leq r\right)&=\Prob(X\in [-b-c,b-c])\\
&\leq \Prob (X\in [-b,b]) \\
&=\Prob \left(|B(s_1)-B(s_2)|\leq r\right),
\end{align*}
thus \eqref{eq:Bs1} holds.
\end{proof}

\begin{theorem} \label{t:isoshy} The set
$$\mathcal{A}=\left\{f\in C[0,1]: \exists^{\lambda_f} y\in \mathbb{R} \textrm{ such that } f^{-1}(y) \textrm{ is not perfect}\right\}$$
is Haar ambivalent in $C[0,1]$.
\end{theorem}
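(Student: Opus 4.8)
The plan is to prove Haar ambivalence by establishing the two halves separately. For the \emph{not prevalent} direction, I would invoke Theorem~\ref{t:occ}: for the prevalent $f$ the occupation measure $\lambda_f$ is absolutely continuous with respect to $\lambda$, so $\lambda_f(\{y\}) = 0$ for every $y$; in particular $\lambda_f$ gives zero mass to the (countable, by the Bruckner--Garg-type analysis, or at any rate $\lambda_f$-null) set of $y$ for which $f^{-1}(y)$ fails to be perfect. More carefully, by Theorem~\ref{t:npns} the prevalent $f$ has level sets that are singletons for only countably many $y$ plus Cantor-sets-with-an-isolated-point for a countable dense $D_f$, hence the ``bad'' $y$'s form a countable set; since $\lambda_f$ is non-atomic for the prevalent $f$ (by Theorem~\ref{t:occ}), this countable set is $\lambda_f$-null, so $\iA$ is disjoint from a prevalent set and therefore not prevalent. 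One should double-check that the intersection of two prevalent sets is prevalent, which holds since shy sets form a $\sigma$-ideal.

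For the \emph{non-shy} direction, the natural strategy is to exhibit a ``probe'' measure $\mu$ on $C[0,1]$ — ideally a compactly supported one, a translated copy of Lebesgue measure pushed along a finite-dimensional affine family, as in the proof of Lemma~\ref{l:notex} and Theorem~\ref{t:nonshy} in \cite{BDE} — witnessing that $\iA + f$ meets $\supp(\mu)$ in a positive-measure set for every $f$. The cleanest approach is to reuse Theorem~\ref{t:nonshy}: the set $\iB = \{f : \exists^{\lambda} y \textrm{ with } f^{-1}(y) \textrm{ a singleton}\}$ is non-shy. The point is that if $f^{-1}(y)$ is a singleton then it is certainly not perfect, and moreover if this happens for $\lambda$-positively many $y$, it should happen for $\lambda_f$-positively many $y$. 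Indeed, if $Y = \{y : \#f^{-1}(y) = 1\}$ has $\lambda(Y) > 0$, then on the set $f^{-1}(Y) \subset [0,1]$ the map $f$ is injective, so $\lambda_f(Y) = \lambda(f^{-1}(Y))$; I need $\lambda(f^{-1}(Y)) > 0$. This is where I must be careful: $\lambda(Y) > 0$ does not automatically give $\lambda(f^{-1}(Y)) > 0$, because $f$ could compress $f^{-1}(Y)$. So the argument should instead start from the \emph{construction} in \cite{BDE} proving Theorem~\ref{t:nonshy}: there, the non-shyness is witnessed by functions built as $f + g_t$ where a whole interval of $y$-values have singleton fibers coming from strictly monotone ``teeth'', and one checks that the corresponding $x$-preimages already have positive Lebesgue measure. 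Alternatively, and perhaps more robustly, combine Theorem~\ref{t:occ} with Theorem~\ref{t:nonshy}: for a non-shy set of $f$ we have both $\lambda_f \ll \lambda$ and $\lambda(Y_f) > 0$ where $Y_f$ is the singleton-fiber set; then $\lambda_f \ll \lambda$ applied in reverse is not what I want — rather I want $\lambda_f(Y_f) > 0$, which again reduces to $\lambda(f^{-1}(Y_f)) > 0$.

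Thus the honest plan is: revisit the probe measure $\mu = \lambda \circ \phi^{-1}$ used in \cite{BDE} to prove Theorem~\ref{t:nonshy} (functions with a strictly monotone piece producing an interval's worth of singleton level sets whose preimages form a genuine subinterval of $[0,1]$), and observe directly that for $\mu$-positively many $h$ in any translate, $h$ has a subinterval $[\alpha,\beta] \subset [0,1]$ on which it is strictly monotone, so $h^{-1}(y)$ is a singleton for every $y \in h([\alpha,\beta])$, and $\lambda_h(h([\alpha,\beta])) = \lambda([\alpha,\beta]) > 0$ since $h$ is injective there. This shows $\iA$ is non-shy with the same witness measure, completing the proof together with the first paragraph. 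The main obstacle is exactly this measure-transfer subtlety — ensuring that ``positively many singleton level sets in the $y$-variable'' translates into ``positively many in the $\lambda_f$-variable'', which forces one to work with the explicit monotone-piece construction rather than the abstract statement of Theorem~\ref{t:nonshy}; once the probe measure is unpacked, the rest is routine.
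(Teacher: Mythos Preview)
Your ``not prevalent'' direction is essentially the paper's, but you have misread Theorem~\ref{t:npns}: it says the Bruckner--Garg set is Haar ambivalent, not prevalent. What you actually need (and what you get) is that it is \emph{non-shy}; intersecting with the prevalent set from Theorem~\ref{t:occ} yields a non-shy set of $f$ for which the non-perfect levels form a countable set and $\lambda_f$ is non-atomic, hence $f\notin\iA$. So $\iA^c$ is non-shy and $\iA$ is not prevalent. With that correction, this half is fine and matches the paper.

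The ``non-shy'' direction has a genuine gap. You correctly isolate the obstacle --- $\lambda(Y_f)>0$ does not imply $\lambda_f(Y_f)>0$ --- but your proposed repair via ``strictly monotone pieces'' cannot work. The set of $h\in C[0,1]$ that are monotone on some nondegenerate subinterval is \emph{shy}: any such $h$ is differentiable at some point (Lebesgue), and the somewhere-differentiable functions are shy by Hunt's theorem. Hence no probe, and in particular no unpacking of the witness in \cite{BDE}, can place a non-shy set inside $\{h:\text{$h$ has a monotone piece}\}$. (There is also the minor slip that strict monotonicity on $[\alpha,\beta]$ gives only $\#\bigl(h^{-1}(y)\cap(\alpha,\beta)\bigr)=1$, not $\#h^{-1}(y)=1$; this is patchable --- the point in $(\alpha,\beta)$ is isolated --- but the shyness obstruction is not.)

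The paper's route is quite different and worth recording. It works with $S_f=\{y:\ f^{-1}(y)\cap[0,1/2)\text{ is a singleton}\}$, so that membership of $y$ in $S_f$ already forces an isolated point of $f^{-1}(y)$ regardless of what happens on $[1/2,1]$. By Theorem~\ref{t:nonshy} (applied on the left half) the set $\iB=\{f:\lambda(S_f)>0\}$ is non-shy, and intersecting with the prevalent set $\iC=\{f:\lambda_f\ll\lambda\}$ from Theorem~\ref{t:occ} keeps it non-shy. For $f\in\iB\cap\iC$ the Lebesgue Density Theorem gives $\lambda(\RR\setminus(S_f+\QQ))=0$, hence by absolute continuity $\lambda_f(S_f+\QQ)=1$; pigeonholing over $q\in\QQ$ produces a non-shy set $\iD$ on which $\lambda\bigl(f^{-1}(S_f+q)\cap[2/3,1]\bigr)>0$ for a fixed $q$. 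Finally one translates by the function $g$ that is $0$ on $[0,1/2]$ and $q$ on $[2/3,1]$: for $f\in\iD$ one has $S_{f-g}=S_f$ and $(f-g)^{-1}(S_{f-g})\cap[2/3,1]=f^{-1}(S_f+q)\cap[2/3,1]$, so $\lambda_{f-g}(S_{f-g})>0$, giving $\iD-g\subset\iA$. This is precisely the device that converts $\lambda$-positivity of the singleton-level set into $\lambda_f$-positivity, and it does not go through monotone pieces at all.
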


\begin{proof} Theorem~\ref{t:npns} easily yields that $\mathcal{A}$ is not prevalent, so it is enough to prove that $\mathcal{A}$ is non-shy.
For all $f\in C[0,1]$ consider
$$S_f=\{y\in \mathbb{R}: f^{-1}(y)\cap [0,1/2) \textrm{ is a singleton}\}.$$
Then  $S_f$ is Borel, because it is easy to see that $\{y\in \mathbb{R}: \#(f^{-1}(y)\cap [0,1/2))\geq 1\}$ and
$\{y\in \mathbb{R}: \#(f^{-1}(y)\cap [0,1/2))\geq 2\}$ are $F_{\sigma}$ sets.
Theorem \ref{t:nonshy} and symmetry imply that
$$\mathcal{B}=\{f\in C[0,1]: \lambda(S_f)>0\}$$
is non-shy. Theorem \ref{t:occ} yields that
%%%%%
$$\mathcal{C}=\{f\in C[0,1]: \lambda_f \textrm{ is absolutely continuous with respect to } \lambda\}$$
is prevalent, so $\mathcal{B}\cap \mathcal{C}$ is non-shy. Assume that $f\in \mathcal{B}\cap \mathcal{C}$. Then $\lambda(S_f)>0$, so the Lebesgue Density Theorem \cite[223B Corollary]{Fr} implies that $S_f+\mathbb{Q}$ has full Lebesgue measure. Therefore the absolute continuity of $\lambda_f$ with
respect to $\lambda$ yields that $\lambda_f(S_f+\mathbb{Q})=1$. Thus there exists a $q(f)\in \mathbb{Q}$ such that
\begin{equation} \label{eq:Sf} \lambda\left(f^{-1}(S_f+q(f))\cap [2/3, 1]\right)>0.
\end{equation}
As shy sets form a $\sigma$-ideal, there is a $q\in \mathbb{Q}$ such that
%%%
$$\mathcal{D}=\{f\in \mathcal{B}\cap \mathcal{C}: q(f)=q\}$$
%%%
is non-shy. Set $I=[0,1/2]$ and $J=[2/3,1]$ and define $g\in C[0,1]$ as
%%%
$$g(x)=\begin{cases} 0 & \textrm{ if } x\in I, \\
q & \textrm{ if } x\in J, \\
\textrm{affine} & \textrm{ otherwise}.
\end{cases} $$
%%%%
Since $\mathcal{D}$ is non-shy and shy sets are invariant under translations, $\mathcal{D}-g$ is also non-shy. Thus it is enough to prove that $\mathcal{D}-g\subset \mathcal{A}$.
Let us fix $f\in \mathcal{D}$, we prove that $f-g\in \mathcal{A}$. It is sufficient to show that
$\lambda_{f-g}(S_{f-g})>0$. Then $g|_{I}\equiv 0$, $g|_{J}=q$ and $q(f)=q$ imply that $S_{f-g}=S_f$ and
\begin{align*} (f-g)^{-1}(S_{f-g})\cap J&=(f-q)^{-1}(S_f)\cap J\\
&=f^{-1}(S_f+q(f))\cap J.
\end{align*}
The above equation and \eqref{eq:Sf} yield
\begin{align*}
\lambda_{f-g}(S_{f-g})&\geq \lambda((f-g)^{-1}(S_{f-g})\cap J)\\
&=\lambda\left(f^{-1}(S_f+q(f))\cap J\right)>0.
\end{align*}
This concludes the proof.
\end{proof}

\section{Generic level sets are perfect}

The aim of this section is to prove the following theorem.

\begin{theorem} \label{t:perfect}  Let $K$ be a compact metric space without isolated points and let $d\in \mathbb{N}^+$. Then for the prevalent $f\in C(K,\mathbb{R}^d)$ for the generic $y\in f(K)$
$$f^{-1}(y) \textrm{ is perfect}.$$
\end{theorem}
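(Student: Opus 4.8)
The plan is to show that the set $\iG$ of functions $f\in C(K,\RR^d)$ for which $\{y\in f(K): f^{-1}(y)\text{ is perfect}\}$ is comeager in $f(K)$ is prevalent. First I would record the two standing facts we have available. By Theorem \ref{t:D2} (note $K$ is uncountable, since a nonempty compact metric space without isolated points is uncountable) the prevalent $f$ admits a nonempty open $U_f\subset\RR^d$ with $\#f^{-1}(y)=2^{\aleph_0}$ for every $y\in U_f$; in particular $f^{-1}(y)$ is uncountable, hence contains a nonempty perfect subset. That alone is not enough — we need $f^{-1}(y)$ itself to be perfect, i.e. to have \emph{no} isolated points, and we need this for a comeager set of $y$, not just on an open set. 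So the heart of the argument is a separate prevalence statement controlling isolated points of fibers.

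The key step is to prove: for the prevalent $f\in C(K,\RR^d)$, the set $B_f=\{y\in f(K): f^{-1}(y)\text{ has an isolated point}\}$ is meager in $f(K)$. To get at this I would fix a countable base $\{V_n\}$ of $K$ and, for each $n$, consider $E_n(f)=\{y: f^{-1}(y)\cap V_n$ is a nonempty singleton, i.e. $=\{x\}$ for some $x\in V_n$ with $x$ isolated in $f^{-1}(y)$ witnessed by $V_n\}$. Then $B_f=\bigcup_n E_n(f)$, so it suffices to show that for the prevalent $f$ each $E_n(f)$ is nowhere dense in $f(K)$ (a routine verification shows $E_n(f)$ is closed in $f(K)$, so nowhere dense reduces to having empty interior). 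Since a countable intersection of prevalent sets is prevalent, I would fix $n$ and shift the whole picture: let $K_1=\cl V_n$ and $K_2=K\setminus V_n$; by Corollary \ref{c:hereditary} it is enough to work inside $C(K_1,\RR^d)$, for then the relevant property is pulled back to a prevalent set of $C(K,\RR^d)$. Inside $C(K_1,\RR^d)$ I would apply Theorem \ref{t:D2} again to $K_1$ (which is still compact without isolated points, hence uncountable) to conclude that for the prevalent $g\in C(K_1,\RR^d)$ there is a nonempty open $W_g$ with every fiber of $g$ over $W_g$ of size continuum; in particular no $y$ in $g(K_1)$ can have $g^{-1}(y)$ a singleton for $y$ in a dense subset of $g(K_1)$ — more precisely, the $y$ with $\#g^{-1}(y)\le 1$ form a set whose complement contains the nonempty open $W_g\cap g(K_1)$, so the "singleton-fiber" values are not dense, giving empty interior of $E_n$. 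Translating back via $R^{-1}$ and intersecting over $n$ yields that $B_f$ is meager in $f(K)$ for the prevalent $f$.

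Combining the pieces: for the prevalent $f$, the fiber $f^{-1}(y)$ is always closed, it is nonempty and uncountable for $y$ in the nonempty open set $U_f\cap f(K)$, and $f^{-1}(y)$ has an isolated point only for $y$ in a meager subset $B_f$ of $f(K)$; hence for the comeager set of $y\in f(K)$ lying outside $B_f$, the fiber $f^{-1}(y)$ is closed with no isolated points, i.e. perfect (it is nonempty because $y\in f(K)$). Then Lemma \ref{l:D} / Corollary \ref{c:hereditary}-style bookkeeping is not even needed at this last stage — we just intersect the prevalent sets produced above, and a countable intersection of prevalent sets is prevalent. The main obstacle I anticipate is the measurability/descriptive bookkeeping: to even speak of "$E_n(f)$ is nowhere dense for the prevalent $f$" one must check that $\{f: E_n(f)\text{ has nonempty interior in }f(K)\}$ (or its complement) is contained in a Borel set, and that the sets $E_n(f)$ behave well as $f$ varies; this is where one must be careful, combining the openness-type arguments used in the proof of Lemma \ref{l:ocB} with the structure of Theorem \ref{t:D2}. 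The geometric content, by contrast, is essentially a single application of Dougherty's theorem on a subspace of $K$.
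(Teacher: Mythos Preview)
Your decomposition is essentially the paper's: fix a countable base $\{V_n\}$, set $K_n=\cl V_n$, and show that for prevalent $f$ and every $n$ the set of $y$ with $\#\bigl(f^{-1}(y)\cap K_n\bigr)=1$ is nowhere dense in $f(K)$. The gap is in the step where you try to deduce ``empty interior of $E_n$'' from a single application of Theorem~\ref{t:D2} to $K_1=\cl V_n$. Dougherty's theorem only hands you a \emph{nonempty} open set $W_g\subset\RR^d$ on which $(f|_{K_1})^{-1}(y)$ has cardinality $2^{\aleph_0}$; the fact that $E_n(f)$ is disjoint from $W_g$ says nothing about whether $E_n(f)$ contains some \emph{other} open subset of $f(K)$. (Your sentence ``the singleton-fiber values are not dense, giving empty interior of $E_n$'' conflates two unrelated properties: missing a nonempty open set is neither ``not dense'' nor ``empty interior''.) So as written the argument does not prove that $E_n(f)$ is nowhere dense.

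What is actually needed---and what the paper does---is a \emph{dense} open set of good values. The paper first shows (using Dougherty on every member of a countable base of $K$, then taking the union of the resulting open sets) that for prevalent $f$ there is an open $U_f$ \emph{dense} in $f(K)$ with $\#f^{-1}(y)=2^{\aleph_0}$ for all $y\in U_f$. It then applies this very statement to each $K_n$ in place of $K$: for prevalent $f$, each restriction $f|_{K_n}$ admits an open $W_{f|_{K_n}}$ dense in $f(K_n)$ on which all its fibers have size $2^{\aleph_0}$. Setting $W_{f,n}=W_{f|_{K_n}}\cup\bigl(f(K)\setminus f(K_n)\bigr)$ gives a dense relatively open subset of $f(K)$, and on $W_f=\bigcap_n W_{f,n}$ every fiber meets each $K_n$ in either $\emptyset$ or a set of size $2^{\aleph_0}$, hence has no isolated point. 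In short: you need two nested layers of the Dougherty argument (a base of $K_n$ inside a base of $K$), not one. A secondary issue is that your ``routine verification'' that $E_n(f)$ is closed in $f(K)$ is not routine and is in fact doubtful for open $V_n$; the paper sidesteps this by working directly with the dense open complements $W_{f,n}$ rather than trying to prove closedness of the bad set.
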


\begin{proof} First we prove that
%%%
\begin{align*} \mathcal{A}=\{&f\in C(K,\mathbb{R}^d): \exists \textrm{ an open set } U_f\subset \mathbb{R}^d \textrm{ such that } \\
&U_f \textrm{ is a dense subset of } f(K) \textrm{ and } \#f^{-1}(y)= 2^{\aleph_0} \textrm{ for all } y\in U_f\}
\end{align*}
%%%%
is prevalent in $C(K,\mathbb{R}^d)$.
Let $\mathcal{V}=\{V_n: n\in \mathbb{N}^+\}$ be a countable basis of $K$ consisting of non-empty open sets.
For all $n\in \mathbb{N}^+$ let $K_n=\cl V_n$ and consider
\begin{align*} \mathcal{A}_n=\{&f\in C(K_n,\mathbb{R}^d): \exists \textrm{ a non-empty open set } U_f\subset \mathbb{R}^d \\
&\textrm{such that } \#f^{-1}(y)= 2^{\aleph_0} \textrm{ for all } y\in U_f \}.
\end{align*}
Since $K$ has no isolated points, the same holds for all $K_n$, hence they are uncountable by \cite[Cor.~6.3]{K}.
Thus Theorem~\ref{t:D2} implies that the $\mathcal{A}_n$ are prevalent. For all $n\in \mathbb{N}^+$ let us define
$$R_n\colon C(K,\mathbb{R}^d)\to C(K_n,\mathbb{R}^d), \quad R(f)=f|_{K_n}.$$
Corollary \ref{c:hereditary} implies that $R^{-1}_n(\mathcal{A}_n)$ are prevalent in $C(K,\mathbb{R}^d)$. As a countable intersection of prevalent sets,
$\bigcap_{n=1}^{\infty} R^{-1}_n(\mathcal{A}_n)$ is also prevalent in $C(K,\mathbb{R}^d)$. Thus it is enough to prove that $\bigcap_{n=1}^{\infty} R^{-1}_n(\mathcal{A}_n)\subset \mathcal{A}$.
Fix $f\in \bigcap_{n=1}^{\infty} R^{-1}_n(\mathcal{A}_n)$ and define
%%%%
$$U_f=\bigcup_{n=1}^{\infty} U_{f|_{K_n}}.$$
%%%%%
Then $U_f\subset \mathbb{R}^d$ is open and is a  dense subset of $f(K)$. Clearly for all $y\in U_f$ there is an $n\in \mathbb{N}^+$ such that $y\in U_{f|_{K_n}}$, so
$$\#f^{-1}(y)\geq \#(f|_{K_n})^{-1}(y)=2^{\aleph_0}.$$
%%%%
Hence $\mathcal{A}$ is prevalent in $C(K,\mathbb{R}^d)$.

Now for all $n\in \mathbb{N}^+$ let
%%%
\begin{align*} \mathcal{B}_n=\{&f\in C(K_n,\mathbb{R}^d): \exists \textrm{ an open set } W_{f}\subset \mathbb{R}^d \textrm{ such that } \\
&W_f \textrm{ is a dense subset of } f(K_n) \textrm{ and } \#f^{-1}(y)=2^{\aleph_0} \textrm{ for all } y\in W_{f}\}.
\end{align*}
Since the $K_n$ have no isolated points, the $\mathcal{B}_n$ are prevalent as above. Corollary \ref{c:hereditary} implies that the $R^{-1}_n(\mathcal{B}_n)$ are prevalent in $C(K,\mathbb{R}^d)$. As a countable intersection of prevalent sets, $\mathcal{B}=\bigcap_{n=1}^{\infty} R^{-1}_n(\mathcal{B}_n)$ is also prevalent in $C(K,\mathbb{R}^d)$. For all $f\in \mathcal{B}$ and $n\in \mathbb{N}^+$ let
$$W_{f,n}=W_{f|_{K_n}}\cup (f(K)\setminus f(K_n))$$
and
$$W_f=\bigcap_{n=1}^{\infty} W_{f,n}.$$
As a countable intersection of dense relatively open sets, $W_f$ is co-meager in $f(K)$. Let us fix $f\in \mathcal{B}$ and $y\in W_f$, it is enough to prove that $f^{-1}(y)$ is perfect. By definition, $y\in W_{f,n}$ for all $n\in \mathbb{N}^+$. If $y\in f(K)\setminus f(K_n)$ then $f^{-1}(y)\cap K_n=\emptyset$,
if $y\in W_{f|_{K_n}}$ then $\#(f^{-1}(y)\cap K_n)=2^{\aleph_0}$.
Thus $\#(f^{-1}(y)\cap K_n)\neq 1$ for all $n\in \mathbb{N}^+$, hence $f^{-1}(y)$ has no isolated point.
Therefore $f^{-1}(y)$ is perfect and the proof is complete.
\end{proof}

\begin{corollary} \label{c:perfect} For the prevalent $f\in C[0,1]$ for the generic
$y\in f([0,1])$
$$f^{-1}(y) \textrm{ is perfect}.$$
\end{corollary}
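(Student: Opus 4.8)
Corollary \ref{c:perfect} is the special case $K=[0,1]$, $d=1$ of Theorem \ref{t:perfect}. The plan is simply to check that the hypotheses of Theorem \ref{t:perfect} are satisfied in this instance and invoke it. First I would note that $[0,1]$ is a compact metric space, and that it has no isolated points: every point of $[0,1]$ is a limit of a sequence of distinct points of $[0,1]$. Second, $d=1\in \NN^+$. Hence Theorem \ref{t:perfect} applies verbatim and yields that for the prevalent $f\in C([0,1],\RR)=C[0,1]$ for the generic $y\in f([0,1])$ the level set $f^{-1}(y)$ is perfect, which is exactly the assertion of the corollary.

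There is essentially no obstacle here, since the corollary is a direct instantiation of the already-proved theorem; the only thing worth spelling out is the identification $C([0,1],\RR^1)=C[0,1]$ as topological (in fact normed linear) spaces, so that the notions of prevalence coincide. If one wanted a proof not relying on the full strength of Theorem \ref{t:perfect}, one could instead combine Corollary \ref{c:hereditary} with Dougherty's Theorem \ref{t:D} directly: every uncountable compact metric space, in particular $[0,1]$, contains a Cantor set $K_0$, and applying Theorem \ref{t:D} on $K_0$ together with the restriction map $C[0,1]\to C(K_0,\RR)$ gives prevalently a continuum-sized-fiber open set; running this over a countable basis of $[0,1]$ and intersecting, exactly as in the proof of Theorem \ref{t:perfect}, produces a co-meager set of $y$ with perfect fibers. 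But since Theorem \ref{t:perfect} is stated and proved above in the required generality, the clean approach is the one-line deduction.

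\begin{proof}
Apply Theorem \ref{t:perfect} with $K=[0,1]$ and $d=1$. The interval $[0,1]$ is a compact metric space without isolated points, and $C([0,1],\RR^1)=C[0,1]$, so the conclusion is immediate.
\end{proof}
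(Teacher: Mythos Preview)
Your proof is correct and is exactly the intended argument: the paper states Corollary~\ref{c:perfect} immediately after Theorem~\ref{t:perfect} without a separate proof, since it is just the special case $K=[0,1]$, $d=1$.
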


\section{Infinite derivative on perfect sets}

The main goal of this section is to prove the following theorem.
\begin{theorem}\label{t:infderiv}
The set
$$\mathcal{D} = \{f \in C[0,1]: \exists \text{ a perfect set } P_f  \text { such that } f'(x) = \infty \textrm{ for all } x\in P_f \}$$
is Haar ambivalent.
\end{theorem}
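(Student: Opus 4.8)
The plan is to prove both halves of the statement — that $\iD$ is not shy and not prevalent — by the same mechanism. The key observation is that a Borel probability measure on $C[0,1]$ is, up to arbitrarily small mass, concentrated on a compact set; a compact subset of $C[0,1]$ is equicontinuous by Arzel\`a--Ascoli, hence admits one modulus of continuity common to all its members; and then a single, carefully chosen function can be added to all of those functions simultaneously so as to force them into, respectively out of, $\iD$.

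Accordingly, the heart of the argument is two construction lemmas, one for each direction. \emph{(A)} For every modulus of continuity $\omega$ there are $h^{+}\in C[0,1]$ and a perfect set $P\subseteq[0,1]$ such that $(g+h^{+})'(x)=\infty$ for every $x\in P$ — hence $g+h^{+}\in\iD$ — whenever $g\in C[0,1]$ satisfies $|g(s)-g(t)|\le\omega(|s-t|)$ for all $s,t$. One takes $h^{+}$ to be a strictly increasing Cantor-type function supported on a very thin Cantor set $P$ built by the usual dissection with generation-$n$ pieces of length $\ell_{n}$ (carefully arranged so that near any point of $P$ the surrounding gaps are comparable to the local scale), placing mass $2^{-n}$ on each piece; the difference quotients of $h^{+}$ at points of $P$ then blow up at rate $\gtrsim 2^{-n}/\ell_{n}$, and since $\omega(t)\to 0$ one may choose $\ell_{n}\downarrow 0$ so fast that this rate exceeds $\omega(t)/t$ by an amount tending to $\infty$, which kills the downward difference quotients of any $\omega$-continuous $g$. \emph{(B)} For every modulus of continuity $\omega$ there is $h^{-}\in C[0,1]$ such that $g+h^{-}$ has no point of infinite derivative — hence $g+h^{-}\notin\iD$ — for every $\omega$-continuous $g$. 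Here $h^{-}$ is a lacunary sawtooth series $\sum_{k}a_{k}\,\phi(x/\delta_{k})$, with $\phi$ the standard sawtooth, amplitudes $a_{k}\gg\max(\delta_{k},\omega(\delta_{k}))$, and scales $\delta_{k}\downarrow 0$ chosen super-exponentially fast, so that $\sum_{k}a_{k}<\infty$ while the blocks barely interact, i.e.\ $\sum_{j>k}a_{j}=o(a_{k})$ and $\sum_{j<k}a_{j}/\delta_{j}=o(a_{k}/\delta_{k})$. Then near any point, at scale $\delta_{k}$, the function $h^{-}$ drops with slope $\approx-a_{k}/\delta_{k}$, which outruns $\omega(|h|)/|h|$ by an amount tending to $\infty$; consequently the lower difference quotient of $g+h^{-}$ is $-\infty$ at every point.

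Given the lemmas, the theorem follows at once. Suppose $\iD$ were shy: fix a Borel set $\iB\supseteq\iD$ and a Borel probability measure $\nu$ with $\nu(\iB+f)=0$ for all $f$. By tightness choose a compact $\iK$ with $\nu(\iK)>0$; by Arzel\`a--Ascoli $\iK$ has a common modulus of continuity $\omega$. Lemma~(A) yields $h^{+}$ with $\iK+h^{+}\subseteq\iD\subseteq\iB$, hence $\iK\subseteq\iB-h^{+}$ and $0<\nu(\iK)\le\nu(\iB+(-h^{+}))=0$, a contradiction; so $\iD$ is non-shy. If $\iD$ were prevalent, then $C[0,1]\setminus\iD$ is shy; the same argument with Lemma~(B) in place of Lemma~(A), so that $\iK+h^{-}\subseteq C[0,1]\setminus\iD\subseteq\iB$, again gives a contradiction. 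Hence $\iD$ is neither shy nor prevalent, i.e.\ Haar ambivalent. Note that no measurability of $\iD$ itself is used, only the Borel superset supplied by the definition of shyness.

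I expect Lemma~(B) to be the genuine obstacle: one must exhibit one bounded continuous function whose downward difference quotients, at \emph{every} point and along a sequence of scales, blow up strictly faster than a prescribed rate $\omega(t)/t$, and this forces the delicate super-exponential choice of the scales $\delta_{k}$ that renders the different frequency blocks negligible against one another; one must also treat separately the easy case $\omega(t)=O(t)$, where it suffices to arrange that the difference quotients of $h^{-}$ merely tend to $-\infty$ everywhere. Lemma~(A) is more routine — essentially a quantitative form of the classical fact that a Cantor-type function has infinite derivative at a large (perfect) subset of its Cantor set — the only subtlety being the choice of a Cantor set on which no artificially large gap sits next to any point.
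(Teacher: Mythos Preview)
Your approach is correct, and for the ``not shy'' half it matches the paper almost exactly: the paper also invokes the translate-every-compact-set criterion (its Lemma~\ref{l:nshy}), extracts a common modulus of continuity via Arzel\`a--Ascoli (Lemma~\ref{l:iK}), and then builds a single nondecreasing function whose difference quotients, measured against that modulus, blow up on a perfect set (Lemma~\ref{l:ih}); the paper's construction is an integral $g(x)=\int_0^x f$ with $f$ a weighted sum of indicators of Cantor-stage intervals rather than a Cantor staircase, but the mechanism is the same as your Lemma~(A).

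The ``not prevalent'' half is where you diverge. The paper does not construct anything: it simply applies \cite[Thm.~1.1]{DW} with $A=B=\emptyset$, which says that the set of $f\in C[0,1]$ monotone at no point is non-shy; since $f'(x)=+\infty$ forces $f$ to be non-decreasing at $x$, this non-shy set lies in $C[0,1]\setminus\iD$, and hence $\iD$ is not prevalent. Your Lemma~(B) instead re-derives (a weak form of) that cited result by hand, via a lacunary sawtooth whose downward difference quotients outrun the prescribed modulus at every point. This is sound---with the choices $|z-x|\asymp\delta_k$ (e.g.\ the far trough on one side or the peak on the other, so that $|z-x|\ge\delta_k/2$ and the $k$-th term contributes a quotient $\le -c\,a_k/\delta_k$), your separation conditions $\sum_{j>k}a_j=o(a_k)$ and $\sum_{j<k}a_j/\delta_j=o(a_k/\delta_k)$ do make the other blocks and the $\omega$-term negligible---but it is genuinely more work than the paper's one-line citation. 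The trade-off is that your argument is self-contained, while the paper's is short because the hard construction has already been done in \cite{DW}.
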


We need some preparation before we prove the theorem. The next lemma is well-known, see e.g. \cite[Lemma~4]{Z} for the proof.

\begin{lemma} \label{l:nshy} Let $G$ be an abelian Polish group and let $A\subset G$. If for all compact set $K\subset G$ there exists a $g\in G$ such that
$K+g\subset A$ then $A$ is non-shy.
 \end{lemma}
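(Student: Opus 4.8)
The plan is to argue by contraposition: assuming $A$ is shy, I will produce a translate of a compact set of positive measure that lands inside $A$, and this will directly contradict the shyness witness. So suppose $A$ is shy. Then by Definition~\ref{d:shy} there is a Borel set $B\subset G$ with $A\subset B$ and a Borel probability measure $\mu$ on $G$ such that $\mu(B+x)=0$ for every $x\in G$. The only structural fact about $G$ that I need is that it is Polish: every finite Borel measure on a Polish space is tight, i.e.\ inner regular with respect to compact sets (the classical theorem that finite Borel measures on Polish spaces are Radon). Since $\mu(G)=1>0$, tightness provides a compact set $K\subset G$ with $\mu(K)>0$.

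Now I apply the hypothesis to this particular compact set $K$: there exists $g\in G$ such that $K+g\subset A$, and hence $K+g\subset B$ because $A\subset B$. Rewriting, $K\subset B-g=B+(-g)$, so monotonicity of $\mu$ gives
$$\mu\bigl(B+(-g)\bigr)\geq \mu(K)>0.$$
Taking $x=-g$ in the shyness condition $\mu(B+x)=0$ yields $\mu(B+(-g))=0$, a contradiction. Therefore no Borel set $B\supset A$ and Borel probability measure $\mu$ can witness shyness, and $A$ is non-shy.

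I expect essentially no genuine obstacle here; the proof is short once tightness is invoked. The only points that require care are, first, correctly citing that $\mu$ is automatically tight (this rests on $G$ being Polish, not merely a topological group), and second, the bookkeeping of the translation signs in the definition of shyness — namely ensuring that pushing a positive-$\mu$-measure compact set into $A$ produces a translate $B+(-g)$ of $B$ of positive measure, which is exactly the translate forbidden by the shyness condition.
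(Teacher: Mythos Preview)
Your argument is correct: tightness of Borel probability measures on Polish spaces gives a compact $K$ with $\mu(K)>0$, the hypothesis yields $K+g\subset A\subset B$, and then $\mu(B+(-g))\geq\mu(K)>0$ contradicts the shyness witness. The paper does not actually supply its own proof of this lemma but simply cites \cite[Lemma~4]{Z}; your contrapositive via inner regularity is exactly the standard argument one finds there, so there is nothing to compare.
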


The following lemma is probably known, but we could not find a reference, so
we outline its short proof.

\begin{lemma} \label{l:iK} Let $\mathcal{K}\subset C[0,1]$ be a compact set. Then there is a strictly increasing subadditive function
$h\in C[0,1]$ such that $h(0)=0$ and for all $f\in \mathcal{K}$ and $x,z\in [0,1]$, $x\neq z$
$$|f(x)-f(z)|<h(|x-z|).$$
\end{lemma}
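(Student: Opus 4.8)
The plan is to extract a common modulus of continuity for the family $\iK$ and then perturb it by a linear term to gain strict monotonicity and a strict inequality. Since $\iK$ is compact in $C[0,1]$, the Arzel\`a--Ascoli theorem shows that $\iK$ is uniformly bounded and equicontinuous. I would then define the \emph{common modulus of continuity}
$$\omega(\delta)=\sup\{|f(x)-f(z)|: f\in\iK,\ x,z\in[0,1],\ |x-z|\le\delta\},\quad \delta\in[0,1].$$
Uniform boundedness makes $\omega$ finite, and by construction $\omega$ is non-negative, non-decreasing, satisfies $\omega(0)=0$, and obeys $|f(x)-f(z)|\le\omega(|x-z|)$ for every $f\in\iK$ and all $x,z\in[0,1]$.

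Next I would record three properties of $\omega$. First, equicontinuity gives $\lim_{\delta\to 0^+}\omega(\delta)=0$. Second, $\omega$ is \emph{subadditive}: if $|x-z|\le s+t$, say $x\le z$, then the point $w=\min(x+s,z)$ lies in $[x,z]$ with $|x-w|\le s$ and $|w-z|\le t$, so the triangle inequality yields $|f(x)-f(z)|\le\omega(s)+\omega(t)$; taking the supremum over $f$ and over such $x,z$ gives $\omega(s+t)\le\omega(s)+\omega(t)$. Third, subadditivity together with monotonicity forces continuity: from $\omega(x+h)\le\omega(x)+\omega(h)$ one gets $0\le\omega(x+h)-\omega(x)\le\omega(h)\to 0$, hence right continuity, and from $\omega(x)\le\omega(x-h)+\omega(h)$ one gets left continuity; with the first property this also gives continuity at $0$, so $\omega\in C[0,1]$.

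Finally I would set $h(t)=\omega(t)+t$. Then $h\in C[0,1]$ with $h(0)=0$; it is strictly increasing since for $s<t$ one has $h(t)-h(s)\ge(t-s)>0$; and it is subadditive because both $\omega$ and the identity are subadditive. For $f\in\iK$ and $x\neq z$ the estimate $|f(x)-f(z)|\le\omega(|x-z|)<\omega(|x-z|)+|x-z|=h(|x-z|)$ gives the required strict inequality, the added linear term being exactly what upgrades the non-strict bound for $\omega$ to a strict one and simultaneously secures strict monotonicity. I do not expect a serious obstacle here: the only real content is the passage from compactness to equicontinuity via Arzel\`a--Ascoli and the observation that the modulus of continuity is automatically subadditive, the linear perturbation serving merely to enforce strictness.
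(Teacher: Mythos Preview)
Your proof is correct and follows essentially the same approach as the paper: define the common modulus of continuity $\omega$ (the paper's $g$), use Arzel\`a--Ascoli for finiteness and continuity at $0$, derive subadditivity and hence continuity, and then set $h(t)=\omega(t)+t$ to obtain strict monotonicity and the strict inequality. Your write-up is in fact a bit more detailed than the paper's, giving the intermediate-point argument for subadditivity explicitly.
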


\begin{proof} Let us define $g\in C[0,1]$ as
\begin{align*} g(t)&=\sup_{f\in \mathcal{K}} M(f,t), \textrm{ where} \\
M(f,t)&=\sup \{|f(x)-f(z)|: x,z\in [0,1],\, |x-z|\leq t\}.
\end{align*}
By the Arzel\'a--Ascoli Theorem $\mathcal{K}$ is bounded and equicontinuous. Boundedness implies that $g(t)<\infty$ for all $t\in [0,1]$.
Clearly, $g$ is non-decreasing and for all $f\in C[0,1]$ and $s,t\in [0,1]$ we have
$M(f,t+s)\leq M(f,t)+M(f,s)$, so $g(t+s)\leq g(t)+g(s)$. Hence $g$ is subadditive.
By equicontinuity we obtain $\lim_{t\to 0+} g(t)=0$, so the subadditivity of $g$
yields
$$\lim_{s\to t} |g(t)-g(s)|\leq \lim_{s\to t} g(|t-s|)=0,$$
thus $g$ is continuous. Finally, let us define $h\in C[0,1]$ as
$$h(t)=g(t)+t.$$
The definition and properties of $g$ imply that $h$ satisfies the required conditions.
\end{proof}

\begin{lemma}\label{l:ih}
Let $h\in C[0,1]$ be a strictly increasing subadditive function with $h(0)=0$ and extend $h$ to $[-1,1]$ by $h(-x) = -h(x)$ for all $x \in [0,1]$.
Then there is a non-empty perfect $P\subset [0,1]$ and a non-decreasing $g \in C[0,1]$ such that for all $p\in P$
$$\lim_{x \rightarrow p} \frac{g(x) -g(p)}{h(x-p)}=\infty.$$
\end{lemma}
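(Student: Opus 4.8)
The plan is to construct $g$ as a lacunary-type series whose blocks are localized near the points of a Cantor set $P$, so that at each $p\in P$ the increments of $g$ are forced to dominate any prescribed modulus controlled by $h$. First I would fix a rapidly increasing sequence of scales: pick integers $n_k\to\infty$ so fast that $h(2^{-n_{k+1}}) \le \varepsilon_k\, h(2^{-n_k})$ for a summable sequence $\varepsilon_k$ (possible since $h$ is continuous, strictly increasing and $h(0)=0$, so $h(2^{-n})\to 0$). Using these scales I build the standard Cantor set $P$ of the $k$-th generation intervals $[a,a+2^{-n_k}]$, keeping, say, the two endpoint subintervals of length $2^{-n_{k+1}}$ inside each. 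Each $p\in P$ then has, at every scale $k$, a dyadic neighbor $p_k$ with $|p-p_k|\asymp 2^{-n_k}$ lying in the same generation-$k$ interval.

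Next I would define $g$ as a sum $\sum_k c_k \varphi_k$, where $\varphi_k$ is a non-decreasing continuous function which is constant outside the generation-$k$ intervals, increases by a controlled amount across each such interval, and is built from little non-decreasing bumps; the coefficients $c_k$ are chosen so that $\sum_k c_k \,(\text{total variation of }\varphi_k)<\infty$, guaranteeing $g\in C[0,1]$ and $g$ non-decreasing. The crucial choice is $c_k := k\cdot h(2^{-n_k}) / (\text{jump of }\varphi_k\text{ across one gen-}k\text{ interval})$, or more simply arrange that $g$ increases by at least $k\,h(2^{-n_k})$ across each generation-$k$ interval. Then for $p\in P$ and $x=p_k$ the neighbor at scale $k$, monotonicity gives $|g(x)-g(p)| \ge k\, h(2^{-n_k}) \ge k\, h(|x-p|)\,$ up to the subadditivity constant, while $h(|x-p|)\le h(2^{-n_k})$; dividing yields a ratio $\ge k \to\infty$. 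One must also check the ratio blows up along the whole approach $x\to p$, not just along the special sequence $p_k$: for a general $x$ near $p$, locate the scale $k$ with $2^{-n_{k+1}}\le |x-p| < 2^{-n_k}$ (or the analogous range), note $g$ has already accumulated all the increments of generations $\le k$ between $p$ and $x$ on the appropriate side — here is where keeping endpoint subintervals matters, so that moving away from $p$ immediately crosses a full generation-$k$ jump — and compare with $h(|x-p|)\le h(2^{-n_k})$, again getting a factor $\to\infty$ by the summability/lacunarity of the scales.

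The main obstacle I anticipate is precisely this last point: ensuring the lower bound on $|g(x)-g(p)|$ in terms of $h(|x-p|)$ holds uniformly for \emph{all} $x\to p$ and all $p\in P$ simultaneously, rather than merely along a sparse sequence or for a single $p$. This requires the geometry of $P$ to be set up so that every $p\in P$ sits at the endpoint of generation-$k$ intervals for infinitely many $k$ and, when one moves from $p$ toward $x$, one necessarily traverses the full $g$-increment of the coarsest generation finer than $|x-p|$; the lacunary gap condition $h(2^{-n_{k+1}}) \le \varepsilon_k h(2^{-n_k})$ then forces the contributions of all finer generations to be negligible compared with that coarse increment, while $h(|x-p|)$ is itself bounded by $h$ at the coarse scale. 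Balancing "$g$ grows fast enough to beat $h$" against "$g$ is still continuous (the full series converges)" is exactly what the factor $k$ (growing, but slower than any geometric rate) is designed to achieve, and verifying convergence of $\sum_k c_k\,\Var(\varphi_k)$ together with the pointwise divergence of the quotient is the technical heart of the argument; everything else (perfectness of $P$, monotonicity and continuity of $g$) is routine.
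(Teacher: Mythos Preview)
Your overall plan is essentially the paper's: define $g$ as the integral of a sum of weighted indicators of the construction intervals, the weight at generation $n$ being of order $n\cdot h(\ell_n)/\ell_n$, and let $P$ be the associated Cantor set; the slow factor $n$ is precisely what makes the quotient diverge while keeping the total variation finite. The gap is your choice of \emph{endpoint} subintervals for $P$, which does not deliver the key claim that ``moving away from $p$ one necessarily traverses the full $g$-increment of the coarsest generation finer than $|x-p|$''. Take any $p\in P$ that from some stage $k_0$ onward always sits in the right child, while the stage-$k_0$ interval itself is the \emph{left} child of its parent. Then $p$ is the right endpoint of its stage-$k_0$ interval, and for $x>p$ in the adjacent gap the segment $[p,x]$ meets no stage-$j$ interval with $j\ge k_0$ except at the single point $p$; hence $g(x)-g(p)$ receives only the contributions from generations $<k_0$, and (for $h(t)=t$ with $\varphi_j$ affine) the quotient stays bounded as $x\downarrow p$. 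A related problem is that your summable-$\varepsilon_k$ lacunarity points the wrong way: if the only full interval between $p$ and $x$ is at generation $k{+}1$, its $g$-increment is $\asymp (k{+}1)\,h(2^{-n_{k+1}})\le (k{+}1)\varepsilon_k\,h(2^{-n_k})$, while $h(|x-p|)$ can be as large as $h(2^{-n_k})$, so the ratio is $\lesssim (k{+}1)\varepsilon_k\to 0$.

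The paper repairs both points at once by taking \emph{four} children $I_{\sigma 0},\dots,I_{\sigma 3}$ at each stage, with lengths $l_n=h^{-1}(5^{-n})$ (so subadditivity forces $l_{n+1}\le l_n/5$, and the $h$-values at consecutive scales are in the \emph{fixed} ratio $1{:}5$), and setting $P=\bigcap_n\bigcup_{\sigma\in\{1,2\}^n}I_\sigma$ to live only in the middle two. Then for every $p\in P$ and every $x$ with $l_{n+1}<|x-p|\le l_n$, a full generation-$(n{+}2)$ interval (one with last digit $0$ or $3$) lies entirely between $p$ and $x$, whichever side $x$ is on. This yields the uniform bound $|g(x)-g(p)|\ge (n{+}2)h(l_{n+2})=\tfrac{n+2}{25}h(l_n)\ge\tfrac{n+2}{25}h(|x-p|)$, while $\int f=\sum_n 4^n n\,5^{-n}<\infty$ gives continuity of $g$. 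The missing idea is therefore ``buffer'' children on both sides of $P$ at every scale, together with a fixed (rather than vanishing) ratio of $h$-values across scales.
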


\begin{proof} We may assume that $h(x)\geq x$ for all $x\in [0,1]$, otherwise we may
add the identity function to it. For all $n\in \mathbb{N}^+$ let
\begin{equation} \label{eq:l_n} l_n=h^{-1}(5^{-n}),
\end{equation}
then $l_1\leq 1/5$. Since $h$ is subadditive on $[0,1]$,
we obtain that $l_{n+1}\leq l_n/5$ for all $n\in \mathbb{N}^+$.
Therefore we can define for all $n\in \mathbb{N}^+$ and $\sigma \in \{0,1,2,3\}^n$ closed intervals
$I_{\sigma}\subset [0,1]$ such that
\begin{enumerate}
\item \label{i:1} $\lambda(I_{\sigma})=l_n$,
\item \label{i:2} $I_{\sigma i} \subset I_{\sigma }$ for all $i\in \{0,1,2,3\}$,
\item \label{i:3} The intervals $\{I_{\sigma}\}_{\sigma\in \{0,1,2,3\}^n}$ are pairwise disjoint and they are placed
according to the lexicographical ordering of the indexes $\sigma$.
\end{enumerate}
Let us define $g\in C[0,1]$ as
\begin{align*} g(x)&=\int_{[0,x]} f \textrm{, where } \\
f&=\sum_{n=1}^{\infty} \sum_{\sigma \in \{0,1,2,3\}^n} n \frac{h(l_n)}{l_n}  \chi_{I_{\sigma}},
\end{align*}
%%%%%
where $\chi_{I_\sigma}$ denotes the characteristic function of $I_\sigma$.
Note that $f$ is integrable so $g$ is well-defined, because \eqref{eq:l_n} and \eqref{i:1} yield the estimate
$$\int_{[0,1]} f=\sum_{n=1}^{\infty} \sum_{\sigma \in \{0,1,2,3\}^n} n \frac{h(l_n)}{l_n} \lambda({I_{\sigma}})=
\sum_{n=1}^{\infty} 4^n n 5^{-n}< \infty.$$
Let
\[P = \bigcap_{n=1}^{\infty} \bigcup_{\sigma \in \{1,2\}^n}  I_{\sigma}.
\]
It is clear that $P$ is a non-empty perfect set. Now we show that $P$ satisfies the desired property.
Fix $p \in P$, $x\in [0,1]$ and $n\in \mathbb{N}^+$ such that $l_{n+1}<|x-p|\leq l_n$, it is enough to prove that
\begin{equation} \label{eq:main} \frac{g(x) -g(p)}{h(x-p)}\geq \frac{n}{25}. \end{equation}
Properties $|x-p|>l_{n+1}$ and \eqref{i:3} imply that there is a $\sigma\in \{0,1,2,3\}^{n+2}$ such that $I_{\sigma}$ is between $x$ and $p$.
The definition of $g$, \eqref{i:1}, \eqref{eq:l_n} and the monotonicity of $h$ yield
$$|g(x) -g(p)|\geq \int_{I_{\sigma}} f \geq (n+2) h(l_{n+2})=\frac{n+2}{25} h(l_{n})\geq  \frac{n+2}{25} h(|x-p|). $$
Clearly $g(x)-g(p)$ and $h(x-p)$ have the same sign, thus the above inequality implies \eqref{eq:main}, which concludes the proof.
\end{proof}

\begin{proof}[Proof of Theorem~\ref{t:infderiv}]
Applying Theorem~\ref{t:DW} for $A=B=\emptyset$ yields that $\mathcal{D}$ is not prevalent.
We next show that $\mathcal{D}$ is not shy. Let $\mathcal{K} \subset C[0,1]$ be a given compact set, by Lemma~\ref{l:nshy}
it is enough to prove that $\mathcal{K}+g\subset \mathcal{D}$ for some $g \in C[0,1]$.
Let $h \in C[0,1]$ be a strictly increasing function guaranteed by Lemma~\ref{l:iK}. We may assume that $h(x)\ge x$ for all $x \in [0,1]$.
Extend $h$ to $[-1,1]$ so that $h(-x) = -h(x)$ for all $x\in [0,1]$. Choose a non-empty perfect set $P\subset [0,1]$ and a non-decreasing function $g\in C[0,1]$ according to Lemma~\ref{l:ih}. The definitions of $h$ and $g$ imply that for all $f\in \mathcal{K}$ and $p\in P$ we have

\begin{align*}
\liminf_{x\to p} \frac{(f+g)(x) - (f+g)(p)}{x-p}&=\liminf_{x\to p} \left( \frac{f(x) - f(p)}{h(x-p)}  + \frac{g(x) - g(p)}{h(x-p)}\right) \frac{h(x-p)}{x-p}\\
&\geq \liminf_{x\to p} \left( -1+\frac{g(x) - g(p)}{h(x-p)}\right) \frac{h(x-p)}{x-p}\\
&\geq (-1+\infty)\cdot 1=\infty.
\end{align*}
Thus $(f+g)'(p) = \infty$ for all $f \in \mathcal{K}$ and $p \in P$, therefore $\mathcal{K}+g\subset \mathcal{D}$.
\end{proof}

\section{Open problems}

\begin{problem}
What can we say about the topological properties of the level sets of the prevalent/non-shy many $f \in C([0,1]^d)$ for $d \ge 2$?
\end{problem}

\begin{problem} Krasinkiewicz \cite{Kr} and Levin \cite{L} independently showed that if $K$ is a non-degenerate continuum then
the generic $f\in C(K)$ has the property that each component of each of its level sets is hereditarily indecomposable.
What can we say from the point of view of prevalence?
\end{problem}

\begin{problem} Buczolich and Darji \cite{BD} showed that if $K$ is a non-degenerate continuum then the
generic $f\in C(K)$ has the property that the Bruckner--Garg Theorem holds when $f^{-1}(y)$ is replaced by $\Comp (f^{-1}(y))$,
the space whose elements are the components of $f^{-1} (y)$ and the topology is the so called upper semicontinuous topology (that is, we consider the factor topology on $f^{-1} (y)$ where the equivalence classes are the components).
What can we say from the point of view of prevalence?
\end{problem}

\begin{problem}
Buczolich and Darji \cite{BD} examined the fiber structure of the generic map $f\in C(\mathbb{S}^2)$, where $\mathbb{S}^2$ is the two-dimensional sphere.
What can we say from the point of view of prevalence?
\end{problem}

%\begin{problem}
%What happens if we replace the notion of Haar null with that of Aronszajn null? (See e.g. \cite{JL} or \cite{BL}.)
%\end{problem}

\begin{problem}
What can we say if we replace $C(K)$ by $C(K, \mathbb{R}^d)=\{f \colon K \to \mathbb{R}^d  : f \textrm{ is continuous}\}$?
\end{problem}

For the generic version of this last question see e.g. \cite{Ka}.

\subsection*{Acknowledgments}
We are indebted to Y. Peres for some helpful conversations, especially for suggesting us the proof of Theorem~\ref{t:occ}. We also thank M. Vizer for some illuminating discussions.

\end{document}